\newcommand{\R}{\mathbb{R}}
\newcommand{\N}{\mathbb{N}}
\newcommand{\E}{\mathbb{E}}
\newcommand{\mJ}{\mathcal J}
\newcommand{\beq}{\begin{equation}}
\newcommand{\eeq}{\end{equation}}
\def\a{\alpha}
\def\b{\beta}
\def\g{\gamma}
\def\G{\Gamma}
\def\t{\tau}
\def\pd{\partial}
\def\half{\frac{1}{2}}
\newcommand{\cA}{{\cal A}}
\newcommand{\cC}{{\cal C}}
\newcommand{\cF}{{\cal F}}
\newcommand{\cH}{{\cal H}}
\newcommand{\cM}{{\cal M}}
\newcommand{\cP}{{\cal P}}
\newcommand{\fDT}{D_{[t,T)}^{1-\beta}}
\newcommand{\fDz}{D_{(0,t]}^{1-\beta}}
\newcommand{\fdT}{\partial_{[t,T)}^{\beta}}
\newcommand{\fdz}{\partial_{(0,t]}^{\beta}}
\newcommand{\fIT}{I_{[t,T)}^{\beta}}
\newcommand{\fIz}{I_{(0,t]}^{\beta}}
\newcommand{\diver}{{\rm div}}
\newtheorem{theorem}{Theorem}[section]
\newtheorem{lemma}[theorem]{Lemma}
\newtheorem{definition}[theorem]{Definition}
\newtheorem{proposition}[theorem]{Proposition}
\newtheorem{remark}[theorem]{Remark}
\numberwithin{equation}{section}
\begin{document}
\title{\Large \bf A time-fractional mean field game}
\author{Fabio Camilli\footnotemark[1] \and Raul De Maio\footnotemark[1]}
\date{\today}
\maketitle
\begin{abstract}
We consider a   Mean Field Games model  where the dynamics of the agents
is subdiffusive. According to the optimal control interpretation of the problem, we get a
system involving fractional time-derivatives  for the Hamilton-Jacobi-Bellman and the Fokker-Planck  equations.
We discuss separately the well-posedness for each of the two equations and
then  we prove existence and uniqueness of the solution to the Mean Field Games system.
\end{abstract}
 \begin{description}
    \item [{\bf AMS subject classification}:] 35R11, 60H05, 26A33, 40L20, 49N70.
     \item [ {\bf Keywords}:] subdiffusion; time change; fractional derivate; fractional Fokker-Planck equation;  fractional Hamilton-Jacobi-Bellman equation; Mean Field Games.
\end{description}
%
\footnotetext[1]{Dip. di Scienze di Base e Applicate per l'Ingegneria,  ``Sapienza'' Universit{\`a}  di Roma, via Scarpa 16,
00161 Roma, Italy, ({\tt e-mail:camilli,demaio@sbai.uniroma1.it})}
%
\pagestyle{plain}
\pagenumbering{arabic}
\section{Introduction}
The study of complex systems and the investigation of their structural and dynamical properties   is at forefront of the interaction
between mathematics and real life sciences such as biology, sociology, epidemiology, etc. Complex systems are characterized by a large number of elementary individuals and strong interactions  among these individuals  which make their evolution hardly predictable by traditional approaches. Hence  in the recent years new techniques have been developed  in order to capture specific properties of these systems.\par
 The Mean Field Games (MFG in short) theory introduced in \cite{hmc}, \cite{ll}   (see  \cite{gs} for a review) is a new paradigm    in the framework of dynamics games  with a large number of players  to deal
  with systems composed by   rational individuals,   i.e. individuals able to choose their behavior on the basis of a  set of preferences and  to change it  in consequence of the interaction with other   members of the population.  MFG theory has been very successful in applications where an intrinsic rationality is embedded in the complex system such as  pedestrian motion, opinion formation, financial market, management of exhaustible resources, etc.  From a mathematical point of view a  MFG model leads to the study of a strongly coupled system
of two partial differential equations: a Hamilton-Jacobi-Bellman (HJB in short) equation related to the optimal control problem solved by a single agent, but influenced by the presence of all (or a part of) the other agents;  a  Fokker-Planck (FP in short)  equation governing the evolution of the distribution of the population and  driven by the optimal strategies chosen by the agents. We refer to \cite{carda} for a nice introduction to the MFG theory.\par
In the  model introduced in \cite{hmc}, \cite{ll}   the dynamics of the single agent is governed by a (possibly degenerate) Gaussian diffusion process. Hence  the  underlying environment has no role in the  problem or, in other words,   is isotropic, an assumption  not satisfied in several applications.\\
The study of anomalous diffusion processes   deviating from the classical diffusive behavior
has lead to the introduction of a class of subdiffusive processes  which displays local motion occasionally interrupted by long sojourns, a trapping effects due to the anisotropy of the medium.  A subdiffusive regime is considered to be a  better model not only for several transport phenomena in physics, but also, for example, in the study of   volatility of  financial markets, bacterial motion, bird flight, etc. (see \cite{mk} for a review). It is worth noting that the  nonmarkovian nature of these processes can be also interpreted  as a manifestation of   memory effects encountered in the study of the phenomena. \\
At a microscopic level,  the classical construction of   a diffusion process as  the
limit of a  Markov chain with finite time average/finite jump variance is replaced, for a subdiffusive process, by the limit of a Continuous Time Random Walk displaying     broad spatial jump distribution  and/or infinite waiting time  between consecutive jumps; at a macroscopic one, the evolution of the probability density function (PDF) associated to a subdiffusive process is governed by a time-fractional FP equation, i.e. a FP equation involving fractional derivatives in time.\par
In this paper, we   introduce a class of  MFG problems in which the dynamics of the agent is subdiffusive rather diffusive as in  the  Lasry-Lions model. The aim is to present a simplified framework in which we take into account only   waiting time effects and not jump distribution ones since this model already presents several interesting  differences compared to the classical MFG theory.  A first important point is
to understand the correct formulation of the MFG system in this framework.
Time-fractional   FP equation governing the evolution of the  PDF of a subdiffusive process were first derived  for the case of a space-dependent drift and constant diffusion coefficient \cite{ms2}. Since then, the theory  has been progressively
generalized to include the case of space-time dependent coefficients which is relevant for our study (see \cite{mgz}, \cite{nane}).
We will consider     weak solution based on a measure theoretic approach, see  \cite{bs},  and we will prove some regularity properties of the  measure-valued solution. \par
On the other hand, a theory of viscosity solution for time-fractional HJB equations has been developed
in \cite{gn} for the first order case and in \cite{ty}  for the second order one (including also  a nonlocal operator in the space variable). We remark that, in these papers, the connection between the HJB equation and the associated control problem via the dynamic
programming principle is not considered. Here we will exploit the Ito's formula  and the corresponding properties developed in \cite{kobayashi} to establish this connection; indeed, this is a crucial point to understand the differential game   associated to the MFG system and to  justify  the specific  form of the  HJB equation considered.
In view of the well-posedness of the time-fractional MFG system  in  Section \ref{sec_MFG}, which requires
a   regular setting for the notion of   solution to the HJB equation, we consider the  theory of {\it mild solutions}   introduced in \cite{kv1} which gives, under appropriate assumptions,   existence and uniqueness of a classical solution.\par
After having considered  the two equations separately, we tackle  the study of the MFG system
\[
\left\{
\begin{array}{lll}
-\partial_t v +\fDT[-\nu \Delta v  + \cH(t,x, D  v) -    G(x, m)]=0,& (t,x) \in (0,T) \times \R^d\\[4pt]
\partial_t m + \left[\nu \Delta \cdot+ \diver \big(D_p \cH(t,x, D  v)\cdot\big)\right] (\fDz m) = 0,\\
m(0,x) = m_0(x) ,\quad v(T,x) = g(x).
\end{array}
\right.\]
where $\fDT$ and $\fDz$ denote the backward and  forward Riemann-Liouville fractional derivatives,
$\cH$ is a scalar Hamiltonian,   convex in the gradient variable, and the
term $G$  associates a real valued function $G(x,m)$   to a probability density $m$.
 We obtain   existence of a
solution by a fixed point argument; moreover we   get uniqueness of the solution
adapting a  classical argument in MFG theory  to this framework and taking advantage of the duality relation
between  the  two equations composing the system.  \par
We   mention that the theory of linear and semilinear fractional  differential equations
is currently an active field of research  \cite{acv1,ly,z}. For fully nonlinear equations,
the theory is at the beginning and, besides the papers \cite{gn,kv2,ty} dealing with  the  HJB equation,
we also mention \cite{acv2} where a    fractional porous media equation is studied.\par
The paper is organized as follows. In Section \ref{sec_frac}, we review some basic definitions of the fractional calculus.
In Section \ref{sec_FP} we introduce the class of subdiffusive processes and we
study the corresponding FP equation.  Section \ref{sec_HJB} is devoted to the time-fractional
HJB equation. Finally, in Section \ref{sec_MFG}, we prove the well posedness of the time-fractional MFG system.

\section{Fractional calculus}\label{sec_frac}
In this section, we   remind the reader     definitions and some basic   properties of
fractional   operators (we refer to    \cite{po} for a
 complete account of the theory). \\
Throughout this section, we always assume that $\b\in (0,1]$.
For   $f:(a,b) \to \R$, the forward and backward Riemann-Liouville fractional integrals are defined by
\begin{align*}
I^{\b}_{(a,t]} f(t):=\frac{1}{\G(\b)}\int_a^t f(\t)\frac{1}{(t-\t)^{1-\b}}d\t, \\
I^{\b}_{[t,b)} f(t):=\frac{1}{\G(\b)}\int_t^b f(\t)\frac{1}{(\t-t)^{1-\b}}d\t.
\end{align*}
The fractional integrals are bounded linear operator over $L^p(a,b)$, $p\ge 1$; indeed, by Holder's inequality,   if $f \in L^p(a,b)$, then
\[
\|I^\b_{(a,t]} f\|_{L^p} \leq \frac{|b - a|^{ \b /p}}{\b\G(\b)}\|f\|_{L^p}.
\]
The forward  Riemann-Liouville  and Caputo derivatives are defined, respectively, by
  \begin{align}
 D^{\b}_{(a,t]}  f(t):=\frac{d}{dt} \left[I^{1-\b}_{(a,t]} f(t) \right]=\frac{1}{\G(1-\b)}\frac{d}{dt} \int_a^t f (\t)\frac{1}{(t-\t)^\b}d\t,\label{fRL}\\
  \pd^{\b}_{(a,t]} f(t):=I^{1-\b}_{(a,t]} \left[\frac{df}{dt}(t)\right] =\frac{1}{\G(1-\b)}\int_a^t\frac{df}{dt}(\t)\frac{1}{(t-\t)^\b}d\t.\label{fC}
  \end{align}
while the   backward Riemann-Liouville and Caputo   derivatives are defined, respectively, by
\begin{align}
  & D^{\b}_{[t,b)} f(t):=-\frac{d}{dt} \left[I^{1-\b}_{[t,b)} f(t) \right]=-\frac{1}{\G(1-\b)}\frac{d}{dt} \int_t^b f (\t)\frac{1}{(\t-t)^\b}d\t,\label{bRL}   \\
  & \pd^{\b}_{[t,b)} f(t):=-I^{1-\b}_{[t,b)} \left[\frac{df}{dt}(t)\right]=-\frac{1}{\G(1-\b)}\int_t^b \frac{df}{dt}(\t)\frac{1}{(\t- t)^\b}d\t. \label{bC}
\end{align}
It is easy  that for $\b\to 1$ the  Riemann-Liouville and Caputo fractional derivatives of a smooth function $f$
converge to the classical derivative $\frac{df}{dt}$, i.e. fractional derivatives are an extension of standard derivatives.
But, since fractional derivatives are defined by an integral and therefore are nonlocal operators,
several properties of differential calculus do  not hold. For example, the product formula
$\pd^{ \b}_{(a,t]} (fg)=(\pd^{ \b}_{(a,t]}  f)g+(\pd^{ \b}_{(a,t]} g)f$ does not hold and consequently there is no
useful formula for the integration by parts. Moreover, if $\alpha+\b<1$, the identity $\pd^{\alpha+\b}_{(a,t]} f=\pd^{\alpha}_{(a,t] }(\pd^{ \b}_{(a,t] } f)$
is in general false and it can be recovered only for $f$ smooth.\\
Note that  the Riemann-Liouville derivative is defined for  a function
$f\in C^0([0,T])$, while the Caputo derivative, even if it is a derivative of order less than $1$, is defined
for  a function $f\in C^1([0,T])$. If $f\in C^1([0,T])$, the following formulas, easily obtained by integration by parts,
establish the connection between the two types of fractional derivatives
\begin{align}
   \pd^{ \b}_{(a,t]} f(t)= D^{\b}_{(a,t]} f(t)-\frac{(t-a)^{-\b}}{\G(1-\b)}f(a),\\
   \label{B1}
   \pd^{\b}_{[t,b)} f(t)= D^{\b}_{[t,b)} f(t) - \frac{(b-t)^{-\b}}{\G(1-\b)}f(b).
\end{align}
The previous identities, also called \emph{regularized Caputo derivatives}, allow
to define Caputo derivatives under less stringent regularity assumptions for $f$.\\

\section{The time-fractional FP equation}\label{sec_FP}
This section is devoted to the study of the time-fractional FP equation, i.e. the FP equation
describing the evolution of the PDF  for a subdiffusive stochastic process. \\
A classical way to define a diffusion process is taking the limit in an appropriate sense of a random walk in dimension 1.
The corresponding  construction for a subdiffusive process   is performed through rescaled  limits of  a  continuous-time random
walk  (CTRW in short). In a CTRW model, a random waiting time $\g_i$ occurs between successive random   jumps  $\xi_i$.
Jumps and    waiting times form a sequence of  i.i.d., mutually independent   random variables $(\g_i)_{i\in\N}\subset \R^+$, $(\xi_i)_{i\in\N}\subset \R^d$.
Set  $s(0)=0$, $t(0)=0$ and let $s(n)=\sum_{i=1}^n \xi_i$ and $t(n)=\sum_{i=1}^n \g_i$ be the position
of the particle after $n$ jumps and the time of $n^{th}$ jump. For $t\ge 0$, define $n(t)=\max\{n\ge 0:\, t(n)\le t\}$,  the number
of jumps by time $t$, and observe that $n(t)$ and $t(n)$ have the inverse relationship
$\{n(t)\ge n\}=\{t(n)\le t\}$. The CTRW process
\[
 x(t)=\sum_{i=1}^{n(t)}\xi_i
\]
determines the location reached at time $t$, for a particle performing a random walk in which random particle jumps are separated by random waiting times. Note that the process $x(t)$ is not in general Markovian.\\
Consider  the limit of a CTRW process  for the following  standard   scaling  (see \cite{kv2,ms,ms2}):
$\xi_i\mapsto \t^{1/\a} \xi_i $, $\g_i\mapsto \t^{1/\b}\g_i $.
If the waiting times have finite mean and the jumps have finite variance, then the scaled CTRW converges in distribution
to a diffusion process. However, if the waiting times have infinite mean, the limit   process is the composition of a  $\a$-stable L\'evy motion $L(t)$ and the inverse of a $\b$-stable subordinator $E(t)$, where $\a\in (0,2]$ and $\b\in (0,1)$. In this paper we only consider
the case  $\a=2$ (the general  case will be considered elsewhere). In this case   the limit process is a time-changed diffusion process which is described by the following stochastic differential equation
\begin{equation*}
\left\{
\begin{array}{ll}
dX_t = b(t, X_t)dE_t + \sigma(t, X_t)dB_{E_t},\\
X_0 = x_0
\end{array}
\right.
\end{equation*}
where $B_t$ is a Brownian motion in $\R^p$ and
$E_t$ is  the inverse of a $\beta$-stable subordinator $D_t$  with Laplace  transform $\E[e^{-\tau D_t}]=e^{-t\cdot\tau^\beta}$, i.e.
\begin{equation}\label{subor}
E_t := \inf\{\tau>0 : D_\tau>t\},\hspace{2 mm}t\geq 0.
\end{equation}
The process $E_t$    is continuous and nondecreasing, moreover  for any $t, \gamma > 0$  its  $\gamma$-moment is given by
\begin{equation}\label{gmoment}
\mathbb{E}(E_t^\gamma) = C(\beta, \gamma)t^{\beta\gamma}
\end{equation}
 for some positive  constant $C(\beta, \gamma)$ (see \cite{ms2}). Note that  the process $E_t$ does not have stationary   and independent increments.\\
To obtain a more explicit description of the process $X_t$, consider  the diffusion process $Y_t$  given by
\begin{equation}\label{sdegno}
\left\{
\begin{array}{ll}
dY_t = b(D_t, Y_t)dt + \sigma(D_t, Y_t)dB_t,\\
Y_0 = x_0,\, D_0 = 0
\end{array}
\right.
\end{equation}
and assume that $b: [0,T]\times \R^d    \to     \R^d$, $\sigma:[0,T]\times  \R^d   \to  \R^{d\times p}$ are continuous and satisfy
\begin{align}
     |b(t,x) - b(t,y)| + |\sigma(t,x)- \sigma(t,y)| &\leq L |x - y|,\quad &\forall x,y\in\R^d, t\in [0,T]\label{hype0}\\
     |b(t,x)| + |\sigma(t,x)| &\leq   M,\quad &\forall x \in\R^d, t\in [0,T],\label{hype1}
\end{align}
for constants $L,M > 0.$
Then, the process $Y_t$ is well defined and the process $X_t$  given by \eqref{sdegno} can be represented  in the subordinated form
\begin{equation}\label{change}
   X_t=Y_{E_t}.
\end{equation}
 As explained in  \cite{kobayashi, mgz}, the formula \eqref{change} allows to obtain the following interpretation of the process $X_t$: the subordinator
 $E_t$ is a change of time, hence  the standard time $t$ can be interpreted as an external  scale,  or the time scale of an external observer, while $E_t$ as an internal   scale which introduces trapping events in the motion. Note that   the change of time induced by subordination  influences  only the dependence of the coefficients $b$, $\sigma$ on the time variable. Hence,  between two jumps   when the particle is not trapped,     the process moves according to a standard diffusion process $Y_t$ since it holds $ D_{E_t}=t$. \par
Formula \eqref{change} allows to deduce the time-fractional FP equation satisfied by the PDF of the process $X_t$
(see  \cite{bs,hku,mgz,nane}). We have
\begin{equation}\label{FPEfr}
\partial_t m(t,x) = \cA \left[\fDz m(t,x)\right],
\end{equation}
where     $\fDz$ is the forward Riemann-Liouville derivate of order $1-\beta$, see \eqref{fRL}, and
\begin{equation*}
\cA \cdot  = -\sum_{i=1}^d  \frac{\pd}{\pd x_i}[b_i(t,x)\cdot]+ \sum_{i,j=1}^d \frac{\pd^2}{\pd x_i\pd x_j}[ a_{ij}(t,x)\cdot ]
\end{equation*}
with $a=\half\sigma(t,x) \sigma(t,x)^t$.
For $\beta=1$,   equation \eqref{FPEfr} reduces to the classical FP equation since in this case  $\fDz m=m$. Moreover, if
the coefficients $b$, $\sigma$ are independent of $t$,  exchanging the derivates in time with the ones in space and applying the
fractional integral $\fIz$ on both the sides, \eqref{FPEfr} can be rewritten as
\[
\fdz m(t,x) = \cA  m(t,x),
 \]
where $\fdz  $ is the forward Caputo derivate of order $\b$, see \eqref{fC}. \\
To introduce a notion of weak solution for \eqref{FPEfr} we need the following result:
\begin{lemma}\label{byparts} Let   $u \in L^1([0,T]\times \R^d)$, then
\begin{align}
\langle D_{(0,t]}^{1-\beta}u, f \rangle = \int_0^T \int_{\R^d} u(t,x) D_{[t,T)}^{1-\beta}f(t,x)dxdt, \label{bypart1}\\
\langle D_{[t,T)}^{1-\beta}u, f \rangle = \int_0^T \int_{\R^d} u(t,x) D_{(0,t]}^{1-\beta}f(t,x)dxdt, \label{bypart2}
\end{align}
for every $f \in C^{\infty}_{c}((0,T)\times \R^d).$
\end{lemma}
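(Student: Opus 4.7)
The plan is to reduce \eqref{bypart1}--\eqref{bypart2} to three elementary moves: a Fubini-type exchange for the Riemann--Liouville fractional integrals, distributional integration by parts in time, and the identities \eqref{bC}, \eqref{B1} connecting Caputo and Riemann--Liouville derivatives (the boundary terms disappear because $f \in C^{\infty}_{c}((0,T)\times \R^d)$ vanishes at $t=0$ and $t=T$).

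First I would prove the basic adjoint formula for the fractional integrals,
\[
\int_0^T\!\int_{\R^d} (I^\beta_{(0,t]} u)(t,x)\, g(t,x)\, dx\, dt = \int_0^T\!\int_{\R^d} u(t,x)\, (I^\beta_{[t,T)} g)(t,x)\, dx\, dt,
\]
by Fubini's theorem applied to the kernel $(t-\tau)^{\beta-1}/\Gamma(\beta)$ on the triangle $\{0<\tau<t<T\}$. The $L^p$ bound on the fractional integral recorded immediately after its definition, together with the fact that in our application $g$ will be smooth and compactly supported, ensures absolute convergence and legitimizes the exchange.

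Second, I would interpret $D^{1-\beta}_{(0,t]} u$ distributionally. Since $I^\beta_{(0,t]} u \in L^1([0,T]\times\R^d)$ by the same $L^p$ bound, definition \eqref{fRL} (with $\beta$ replaced by $1-\beta$) gives $D^{1-\beta}_{(0,t]} u = \partial_t(I^\beta_{(0,t]} u)$ in the sense of distributions on $(0,T)\times\R^d$. Pairing against $f$ and integrating by parts in $t$ produces no boundary contribution, so
\[
\langle D^{1-\beta}_{(0,t]} u, f\rangle = -\int_0^T\!\int_{\R^d} (I^\beta_{(0,t]} u)(t,x)\, \partial_t f(t,x)\, dx\, dt.
\]
The adjoint identity from the first step, applied with $g=-\partial_t f$, then rewrites this as $\int_0^T\!\int_{\R^d} u(t,x)\, \bigl(-I^\beta_{[t,T)}\partial_t f\bigr)(t,x)\, dx\, dt$.

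To conclude, I would identify the kernel $-I^\beta_{[t,T)}\partial_t f$ with $D^{1-\beta}_{[t,T)} f$: by \eqref{bC} (with $\beta\mapsto 1-\beta$) it equals the regularized Caputo derivative $\partial^{1-\beta}_{[t,T)} f$, and by the conversion formula \eqref{B1} together with $f(T,\cdot)=0$ this coincides with $D^{1-\beta}_{[t,T)} f$. This establishes \eqref{bypart1}; identity \eqref{bypart2} follows by the mirror-image argument with the roles of the forward and backward operators swapped and $f(0,\cdot)=0$ used at the last step. I do not foresee any substantive obstacle: the only point that requires genuine care is the absolute convergence needed for Fubini when $u$ is merely $L^1$, and this is handled cleanly by the explicit $L^p$ estimate on the fractional integral cited above.
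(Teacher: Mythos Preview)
Your proposal is correct and follows essentially the same route as the paper's proof: both use the Fubini adjoint identity for the fractional integrals, integrate by parts in $t$ (boundary terms vanish by compact support), and then identify $-I^\beta_{[t,T)}\partial_t f$ with $D^{1-\beta}_{[t,T)}f$ via the Caputo--Riemann--Liouville relation and $f(T,\cdot)=0$. Your treatment is in fact slightly more careful in justifying the Fubini step through the $L^p$ bound on the fractional integral.
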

\begin{proof}
Taking into account   the identity
$$\int_0^T k(t)\fIT h(t)  dt = \int_0^T h(t) \fIz k(t)dt,$$
we have    for $f \in C^{\infty}_c((0,T) \times \R^d)$
\begin{align}
&\int_0^T \int_{\R^d}\fDz u(t,x) f(t,x) dxdt =   \int_0^T \int_{\R^d}\partial_t(\fIz u)(t,x) f(t,x) dxdt  =\nonumber\\
&   \int_{\R^d}\fIz u(t,x)f(t,x)dx \mid_0^T - \int_0^T \int_{\R^d} \fIz u(t,x)\pd_tf(t,x)dx dt= \label{conto}\\
&- \int_0^T \int_{\R^d} u(t,x) \fIT(\pd_tf(t,x))dx dt= \int_0^T \int_{\R^d} u(t,x) \partial_{[t,T)}^{1-\beta} f(t,x)dx dt\nonumber
\end{align}
and the identity \eqref{bypart1} follows observing that for $f \in C^{\infty}_{c}((0,T)\times \R^d)$, $\partial_{[t,T)}^{1-\beta} f(t,x)=\fDT f(t,x)$.
The identity \eqref{bypart2} is proved in a similar way.
\end{proof}
The previous lemma, together with the usual distributional rules, justifies the following:
\begin{definition}\label{weak2}
Given $m_0\in \cP_1(\R^d)$,  $m\in L^1([0,T],\cP_1(\R^d))$ is said to be a weak solution to \eqref{FPEfr} with the initial condition $m(x,0)=m_0(x)$ if for any test function $\phi\in C^\infty_c([0,T)\times\R^d)$, we have
\[\int_{\R^d}\phi(0,x)dm_0(x)+\int_0^T\int_{\R^d}\Big[\pd_t\phi+\fDT\big( \sum_{i=1}^d  b_i(t,x)\frac{\pd\phi}{\pd x_i}   + \sum_{i,j=1}^d a_{ij}(t,x)\frac{\pd^2\phi}{\pd x_i\pd x_j}\big)   \Big]dm(t)(x)=0.\]
\end{definition}
The following result for   \eqref{FPEfr} has been proved in \cite{bs,mgz,nane}
\begin{theorem}\label{thm_FFP1}
If $m_0$ is the law of $X_0$, then the law $m_t$ of   $X_t $ is the unique weak solution of the fractional FP equation  \eqref{FPEfr}
such that $m(0) = m_0$.
\end{theorem}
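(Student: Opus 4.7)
I treat existence and uniqueness separately. Existence rests on the subordination formula \eqref{change} together with the It\^o calculus for time-changed semimartingales developed in \cite{kobayashi}, while uniqueness is obtained by a duality argument against the backward fractional equation associated to the weak formulation.

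\textbf{Existence.} Fix a test function $\phi \in C^\infty_c([0,T)\times\R^d)$ and write $\cL = \sum_{i} b_i \pd_{x_i} + \sum_{i,j} a_{ij} \pd^2_{x_i x_j}$ for the spatial generator of $Y$. The plan is to apply the It\^o formula from \cite{kobayashi} to $\phi(t,X_t)$ along $dX_t = b(t,X_t)\,dE_t + \s(t,X_t)\,dB_{E_t}$, evaluate at $t = T$ where $\phi$ vanishes, and take expectations; the stochastic integral with respect to $B_{E_t}$ is a martingale and drops out, leaving
\begin{equation*}
\int_{\R^d}\phi(0,x)\,dm_0(x) + \E\!\left[\int_0^T \pd_t\phi(t,X_t)\,dt + \int_0^T \cL\phi(t,X_t)\,dE_t\right] = 0.
\end{equation*}
The first two terms already match the shape required by Definition \ref{weak2}. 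The third term, an integral against $dE_t$, must be recast as a standard Lebesgue integral carrying a fractional operator: using the explicit Laplace transform $\E[e^{-\lambda E_t}]$ of the inverse stable subordinator one rewrites it as a pairing involving the fractional integral $\fIz$, and an application of Lemma \ref{byparts} then transfers the operator onto $\phi$, producing exactly the term $\fDT(\cL\phi)$ appearing in Definition \ref{weak2}.

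\textbf{Uniqueness and main obstacle.} Let $m^1, m^2$ be two weak solutions with the same initial datum and set $\mu = m^1 - m^2$; then $\int_0^T\int_{\R^d}[\pd_t\phi + \fDT(\cL\phi)]\,d\mu(t)(x)\,dt = 0$ for every admissible $\phi$. Given $h \in C^\infty_c((0,T)\times\R^d)$, the plan is to solve the inhomogeneous backward dual problem
\begin{equation*}
\pd_t \phi + \fDT(\cL \phi) = h \text{ on } (0,T)\times\R^d, \qquad \phi(T,\cdot) = 0,
\end{equation*}
in a class regular enough to be cut off near $t = T$ and in $x$ so as to become an admissible test function for Definition \ref{weak2}. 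Substituting back yields $\int_0^T\int h\,d\mu(t)(x)\,dt = 0$ for every such $h$, whence $\mu \equiv 0$. The hard part is precisely this construction: one needs classical solvability of the dual equation together with a temporal cut-off compatible with the nonlocal operator $\fDT$, controlled via the $L^p$-bounds on fractional integrals recalled in Section \ref{sec_frac} and the regularity theory of \cite{bs,mgz,nane}. An alternative that bypasses duality is to work directly with the representation $m(t,x) = \int_0^\infty p_Y(\t,x)\,\h(t,\t)\,d\t$, where $p_Y$ is the density of $Y$ and $\h(t,\t)$ is the density of $E_t$, reducing uniqueness to that of the classical FP equation for $Y$ via the injectivity of the Laplace transform in the time variable.
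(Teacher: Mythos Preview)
The paper does not actually prove Theorem~\ref{thm_FFP1}: the sentence immediately preceding the statement reads ``The following result for \eqref{FPEfr} has been proved in \cite{bs,mgz,nane}'', and no argument is supplied. So there is no in-paper proof to compare your proposal against; anything you write is necessarily going beyond what the paper itself does.

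That said, a few comments on the substance of your sketch. Your existence argument is in the right spirit and in fact mirrors the computation the authors carry out later in the proof of Proposition~\ref{thm1}: It\^o's formula from \cite{kobayashi}, followed by rewriting the $dE_s$-integral using the density $h(r,t)$ of $E_t$ and the fractional equation it satisfies (Lemma~\ref{lem:density}). Note, however, that in the paper this conversion is done via the PDE for $h$ rather than via the Laplace transform of $E_t$ that you invoke; the Laplace-transform route is cleanest when the coefficients are time-independent, whereas here $b,\sigma$ depend on $t$, so the density/PDE approach of Lemma~\ref{lem:density} is the more robust choice.

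For uniqueness, your duality plan is reasonable but, as you yourself flag, the construction of a sufficiently regular solution to the backward dual equation with terminal datum zero is the whole difficulty, and nothing in Section~\ref{sec_frac} gives you that for free. Your alternative route via the representation $m(t,x)=\int_0^\infty p_Y(\tau,x)\,h(t,\tau)\,d\tau$ has a genuine gap in the present setting: the auxiliary process $Y$ in \eqref{sdegno} has coefficients $b(D_t,Y_t)$, $\sigma(D_t,Y_t)$ that depend on the subordinator $D$, so $Y$ is \emph{not} independent of $E$ and does not have a deterministic transition density $p_Y(\tau,x)$ to integrate against $h(t,\tau)$. That formula is valid only when $b,\sigma$ are time-independent, which is precisely the special case the paper singles out after \eqref{FPEfr}. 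For the general time-dependent case treated here you would need the arguments of \cite{bs,mgz,nane}, which is exactly why the authors cite them rather than reproduce a proof.
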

To conclude this section we provide some regularity properties of the solution of \eqref{FPEfr}
(see \cite{carda} for a corresponding result in the classical case). Denote with $\cP_1(\R^d)$ the set of the probability measures over $\R^d$ with finite first moment,  endowed with the Kantorovich-Rubinstein distance
$$d_1(\mu_1, \mu_2) := \inf_{\g\in\Pi(\mu_1,\mu_2)}\int_{\R^{2d}}|x-y|d\g(x,y),
$$
where $\Pi(\mu_1,\mu_2)$ is the set of the Borel probability measures on $\R^{2d}$ with marginal distributions $\mu_1$, $\mu_2$.

\begin{proposition}
Given $m_t$   as in Theorem \ref{thm_FFP1}, then
\begin{itemize}
  \item  The map $m:[0,T] \to \cM^+(\R^d)$ is $\frac{\beta}{2}$-Holder continuous, i.e. there is a constant $C= C(M,L)$, where
  $M,L$ as in \eqref{hype0}-\eqref{hype1},  such that
  for every $t,s \in [0,T]$
\begin{equation}\label{est_m1}
d_1(m_t, m_s) \leq C |t - s|^{\beta/2},
\end{equation}
\item There is a constant $c=c(M)$ such that
\begin{equation}\label{est_m2}
 \int_{\R^d}|x|^2dm_t(x) \leq c\left(\E|X_0|^2 + t^{2\beta} + t^{\beta}\right)
\end{equation}
\end{itemize}
\end{proposition}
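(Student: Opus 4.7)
The natural approach is to use the subordination representation $X_t=Y_{E_t}$ from \eqref{change} and to realize $X_t$ and $X_s$ on a common probability space, which gives the coupling bound
\begin{equation*}
d_1(m_t,m_s)\leq \E|X_t-X_s|=\E|Y_{E_t}-Y_{E_s}|.
\end{equation*}
Both claims of the proposition will then follow from standard SDE estimates for $Y$ combined with the moment formula \eqref{gmoment} for the inverse $\beta$-stable subordinator.

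For the H\"older bound I would assume $s<t$ and write, using \eqref{sdegno},
\begin{equation*}
Y_{E_t}-Y_{E_s}=\int_{E_s}^{E_t}b(D_r,Y_r)\,dr+\int_{E_s}^{E_t}\sigma(D_r,Y_r)\,dB_r.
\end{equation*}
By \eqref{hype1} the drift contributes at most $M(E_t-E_s)$. For the stochastic integral, since the subordinator $D$ is independent of the driving Brownian motion $B$, conditioning on the $\sigma$-algebra generated by $D$ makes $E_s$ and $E_t$ deterministic, so the It\^o isometry gives
\begin{equation*}
\E\Big|\int_{E_s}^{E_t}\sigma(D_r,Y_r)\,dB_r\Big|^2\leq M^2\E(E_t-E_s).
\end{equation*}
Combining with Jensen's inequality produces
\begin{equation*}
\E|X_t-X_s|\leq M\,\E(E_t-E_s)+M\,\big(\E(E_t-E_s)\big)^{1/2}.
\end{equation*}
Now \eqref{gmoment} with $\gamma=1$ yields $\E(E_t-E_s)=C(\beta,1)(t^\beta-s^\beta)$, and the subadditivity $t^\beta-s^\beta\leq(t-s)^\beta$ (valid for $\beta\in(0,1]$) together with $(t-s)^\beta\leq T^{\beta/2}(t-s)^{\beta/2}$ on $[0,T]$ produces the constant $C=C(M,L,\beta,T)$ and concludes \eqref{est_m1}.

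For the second-moment bound the same decomposition applied on $[0,\tau]$ together with \eqref{hype1} and the It\^o isometry yields the classical estimate $\E|Y_\tau|^2\leq 3\E|Y_0|^2+3M^2(\tau+\tau^2)$. Specializing to $\tau=E_t$, taking expectation, and invoking \eqref{gmoment} for $\gamma=1,2$ gives
\begin{equation*}
\int_{\R^d}|x|^2\,dm_t(x)=\E|Y_{E_t}|^2\leq 3\,\E|X_0|^2+3M^2\big(C(\beta,2)\,t^{2\beta}+C(\beta,1)\,t^\beta\big),
\end{equation*}
which is \eqref{est_m2}. The only nontrivial technical point is the treatment of the stochastic integral across the random limits $E_s,E_t$: these are not stopping times for the Brownian filtration, so one must argue via conditioning on the $\sigma$-algebra generated by $D$ or, equivalently, work in the enlarged filtration $\sigma(B_u:u\leq\tau)\vee\sigma(D)$ in which $B$ remains Brownian by independence and $E_s,E_t$ are initially measurable. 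Once this point is settled, everything reduces to the explicit moments of $E_t$ supplied by \eqref{gmoment}.
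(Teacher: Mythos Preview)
Your argument is correct and follows essentially the same route as the paper: the coupling bound $d_1(m_t,m_s)\le\E|X_t-X_s|$, the split into drift and martingale parts, and the reduction of both contributions to the moments \eqref{gmoment} of $E_t$. The only cosmetic difference is that the paper estimates the diffusion term via $\E|B_{E_t}-B_{E_s}|=\E\sqrt{E_t-E_s}$ (integrating against the density of $E$) rather than through the It\^o isometry you use after conditioning on $D$; both computations yield the same $\sqrt{\E(E_t-E_s)}$ bound, and your version is arguably the cleaner justification.
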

\begin{proof}
We first observe that
$$d_1(m(t),m(s))\le \int_{\R^{2d}}|x-y| d\pi(x,y)=\E[|X_t-X_s|]$$
where $\pi$ is the law of the pair $(X_t,X_s)$.
Assume w.l.o.g. $s<t$, then
\begin{align*}
X_t - X_s &=\int_s^t dX_z = \int_s^t b(X_z, z)dE_z + \int_s^t \sigma(X_z, z)dB_{E_z}.
\end{align*}
By \eqref{hype1}, we have
\begin{align*}
\mathbb{E}(|X_t - X_s|) &\leq M (\mathbb{E}(|E_t - E_s|) + \mathbb{E}(|B_{E_t} - B_{E_s}|)).
\end{align*}
Since $t\geq s$, then $E_t \geq E_s$ a.s.. Moreover,  by  \eqref{gmoment}, we have
\[
\mathbb{E}(|E_t - E_s|)  = \mathbb{E}(E_t - E_s) = C(\beta, 1)(t^{\beta} - s^{\beta})\leq C(\beta, 1)| t - s |^{\beta}.
\]
On the other hand, denoted by $h(\cdot,t)$ the PDF of $E_t$, we have
\begin{align*}
\mathbb{E}(|B_{E_t} - B_{E_s}|) &= \mathbb{E}(|\int_{E_s}^{E_t}dB_z|)\\
&=\int_0^{+\infty}\int_0^{+\infty} \mathbb{E}(|B_{r_1} - B_{r_2}|)h(r_1, t)h(r_2, s)dr_1 dr_2\\
&=\int_0^{+\infty}\int_0^{+\infty} \sqrt{r_1 - r_2} h(r_1, t)h(r_2, s)dr_1 dr_2\\
&=\mathbb{E}(\sqrt{E_t - E_s})\leq \sqrt{\mathbb{E}(E_t - E_s)}\\
&\leq C(\beta, 1)^{\frac{1}{2}}|t - s|^{\beta/2},
\end{align*}
which gives \eqref{est_m1}.
To prove \eqref{est_m2} we estimate
\begin{align*}
\int_{\R^d} |x|^2 dm_t(x) &= \E(|X_t|^2)\leq 2\E\left[|X_0|^2 + |\int_0^t b(s,X_s)dE_s|^2 + |\int_0^t \sigma(s, X_s)dB_{E_s}|^2\right]\\
&\leq 2\left(\E(|X_0|^2) + MC(\beta, 2)t^{2\beta} + M \int_0^{+\infty}\E(|B_s|^2)h(s,t)ds\right)\\
&\leq 2\left(\E(|X_0|^2) + MC(\beta, 2)t^{2\beta} + M\E(| E_t|)\right)\\
&\leq 2(\E(|X_0|^2) + MC(\beta, 2)t^{2\beta} + MC(\beta, 1)t^{\beta}.
\end{align*}
The thesis follows taking $c=2\max\{1, MC(\beta,2), MC(\beta, 1)\}$.
\end{proof}

\section{The fractional HJB equation}\label{sec_HJB}
In this section we introduce an optimal control problem for a class of time-changed diffusion processes
and we deduce, at a formal level, the corresponding dynamic programming equation, which turns
out to be a time-fractional HJB equation. \\
Let $(\Omega, \mathcal{F},\mathcal{F^t}, \mathbb{P})$ be a filtered probability space  and let $(B_t)_{t\ge 0} $ be a Brownian motion in $\R^d$   $\mathcal{F^t}$ adapted. \\
Fixed $T>0$,   consider  the controlled process $(X_s)_{s\geq t}$   given by the solution of the    time-changed  stochastic differential equation
\begin{equation}\label{stabledp}
\left\{
\begin{array}{l}
dX_s = f(s, X_s, u_s)dE_s +   \sqrt{2 \nu}\,dB_{E_s},\qquad s \in (t,T]\\
X_t=x.
\end{array}
\right.
\end{equation}
In \eqref{stabledp}, $E_s$ is a non increasing process, defined by
\begin{equation}\label{time_scale}
 E_s := T - \bar E_{T-s}
\end{equation}
with $\bar E_s$ the inverse of a $\beta-$stable subordinator   such  that $\bar E_0=0$, see \eqref{subor}, and  $\nu$ is a positive constant. The control law
$(u_s)_{s \geq 0}$ belongs to $\mathcal{U}$,   the  class   of progressive measurable processes taking values in the compact metric space $U$.
Observe that if the process $(Y_s)_{s\ge t}$ is the controlled diffusion process given by
\begin{equation}\label{dp}
dY_s = f(D_s,Y_s, \bar u_s)ds + \nu dB_s,
\end{equation}
then $X_s= Y_{E_s}$ is a solution of \eqref{stabledp} for  the time-changed control law  $u_s = \bar u_{E_s}$.\\
Given    $L: [0,T]\times \R^d \times U \to \R$  and   $g: \mathbb{R}^d \to \R$ representing respectively
the running cost and the  terminal cost, we consider  the cost functional
\begin{equation}\label{Jcost}
J(t,x,u) := \E_{x,t}\left[\int_t^T L(s, X_s, u_s)dE_s + g(X_T)   \right],
\end{equation}
where $X_s$ satisfies \eqref{stabledp}.
Note that the time in the cost functional  \eqref{Jcost} is rescaled according to the process $E_t$ such that $E_T=T$. Indeed, as   we explained in the previous section, $E_s$ is a change of time which represents an inner time scale for the process $X_s$ and also the cost functional
is evaluated according to this scale. Moroever,  the agent knows the final cost $g$ at time $T$
and,  accordingly to this datum, computes the optimal strategy backward in time (see also Remark \ref{rem_time}).\\
Define   the   value function $v: [0,T] \times \R^d \to \R$ by
\begin{equation}\label{value}
v(t,x) = \inf_{u \in \mathcal{U}}J(t,x,u).
\end{equation}
\begin{proposition}
Let $(t,x) \in [0,T) \times \R^d$ be given. Then, for every stopping time $\theta$ valued in $[t,T],$ we have
\begin{equation}\label{dpp}
v(t,x) = \inf_{u \in \mathcal{U}}\mathbb{E}_{x,t}\left[\int_t^\theta L(s,X_s, u_s)dE_s + v(\theta, X_\theta)\right].
\end{equation}
\end{proposition}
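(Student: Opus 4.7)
The plan is to prove the DPP by the standard two-inequality argument: $v(t,x)\le W(t,x)\le v(t,x)$, where $W(t,x)$ denotes the right-hand side of \eqref{dpp}. Both inequalities rest on the tower property for the filtration $(\mathcal F^s)$ stopped at $\theta$, combined with the subordinated representation $X_s=Y_{E_s}$ from \eqref{change}--\eqref{dp} and the corresponding control correspondence $u_s=\bar u_{E_s}$. Under the change of variable $r=E_s$ the cost \eqref{Jcost} becomes
\begin{align*}
J(t,x,u)=\E_{x,t}\Big[\int_{E_t}^{T}L(D_r,Y_r,\bar u_r)\,dr+g(Y_T)\Big],
\end{align*}
which reduces each subproblem starting at $(\theta,X_\theta)$ to a control problem for the Markovian pair $(Y_r,D_r)$ started at the random time $E_\theta$.

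\textbf{Upper bound $v(t,x)\le W(t,x)$.} Fix $\varepsilon>0$ and an arbitrary admissible $u\in\mathcal U$. For each outcome of $(\theta,X_\theta,E_\theta)$, use a standard measurable selection argument (the value function is Borel thanks to the continuity provided by the assumptions \eqref{hype0}--\eqref{hype1}) to produce a progressively measurable $\varepsilon$-optimal control $u^\varepsilon$ for the subproblem at $(\theta,X_\theta)$. The concatenation $\tilde u:=u\,\mathbf 1_{[t,\theta]}+u^\varepsilon\,\mathbf 1_{(\theta,T]}$ is then admissible, so $v(t,x)\le J(t,x,\tilde u)$. Splitting the integral over $(t,\theta)$ and $(\theta,T)$, taking conditional expectations with respect to $\mathcal F^\theta$, and using the strong Markov property of the pair $(Y,D)$ via the identification $X=Y\circ E$, the tail contribution equals the conditional cost $J(\theta,X_\theta,u^\varepsilon)$, which is bounded by $v(\theta,X_\theta)+\varepsilon$. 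Sending $\varepsilon\to 0^+$ and taking infimum over $u$ yields the inequality.

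\textbf{Lower bound $v(t,x)\ge W(t,x)$ and main obstacle.} Conversely, pick $u^\varepsilon\in\mathcal U$ with $J(t,x,u^\varepsilon)\le v(t,x)+\varepsilon$. Its restriction to $(\theta,T]$ is admissible for the subproblem starting at $(\theta,X_\theta)$; conditioning on $\mathcal F^\theta$ and using that the conditional tail cost is bounded below by $v(\theta,X_\theta)$ gives
\begin{align*}
v(t,x)+\varepsilon\ge\E_{x,t}\Big[\int_t^\theta L(s,X_s,u^\varepsilon_s)\,dE_s+v(\theta,X_\theta)\Big]\ge W(t,x),
\end{align*}
and $\varepsilon\to 0$ concludes. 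The main obstacle in both inequalities is the non-Markovianity of $X$: since the inverse $\beta$-stable subordinator $\bar E$ lacks independent increments, conditioning on $X_\theta$ alone is not enough to restart the optimization. One must condition on the enriched state $(X_\theta,E_\theta)$ and invoke the strong Markov property of the Markovian pair $(Y,D)$; the $\varepsilon$-optimal tail controls have to be measurably selected in this enriched state space, and this is the chief technical point of the proof.
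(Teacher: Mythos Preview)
Your proposal is correct and follows precisely the standard two-inequality DPP argument that the paper has in mind; the paper itself does not spell out any proof but simply states that ``the proof is based on standard arguments in control theory (see \cite[Thm.~3.3]{yz}).'' Your sketch is therefore more detailed than what the paper provides, and your identification of the non-Markovianity of $X$ (and the need to pass to the Markovian pair $(Y,D)$ via the subordinated representation) is exactly the adaptation required beyond the classical Yong--Zhou argument.
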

The proof   is based on standard arguments   in control theory (see \cite[Thm.3.3]{yz}). \\
Define the Hamiltonian $\cH: [0,T] \times \R^d \times \R^d   \to \R$  by
\begin{equation}\label{hamiltonian}
\cH(t,x,p ) := \sup_{u \in U} \left\{ -  f(t,x,u)\cdot  p - L(t,x,u)\right\}
\end{equation}
and   consider
the fractional HJB equation
\begin{equation}\label{HJBcap}
   \fdT v -\nu \Delta v +\cH(t,x,D v) = 0,\qquad(t,x) \in (0,T)\times\R^d
    \end{equation}
where $\fdT$ is the backward Caputo derivative defined in \eqref{bC}. We prove that the value function can be characterized as a solution of \eqref{HJBcap}.
We need a preliminary lemma.
\begin{lemma}\label{lem:density}
The function $h(\cdot, s)$, for $r\in [s,T]$, is the PDF of the process $E_s$ defined in \eqref{time_scale} iff it is a weak solution
of
\begin{equation}\label{B3}
\partial_s  h (r,s)= -D_{[s,T)}^{1-\beta}[\partial_r  h(r,s)] - \delta_T(r)\delta_T(s).
\end{equation}
\end{lemma}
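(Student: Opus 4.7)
The plan is to deduce \eqref{B3} from the known governing equation for the PDF of the forward inverse $\beta$-stable subordinator $\bar E_t$ (with $\bar E_0=0$), pushed through the time reversal \eqref{time_scale}.

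First I would observe that $\bar E_{\bar t}$ can be written as $\bar Y_{\bar E_{\bar t}}$ with $\bar Y_{\bar t} \equiv \bar t$, i.e.\ the degenerate instance $b\equiv 1$, $\sigma \equiv 0$ of \eqref{sdegno}. Specializing Theorem \ref{thm_FFP1} then shows that the PDF $\bar h(\bar r,\bar t)$ of $\bar E_{\bar t}$ is the unique weak solution on $[0,\infty)\times[0,\infty)$ of
\begin{equation*}
\partial_{\bar t}\bar h + D_{(0,\bar t]}^{1-\beta}\partial_{\bar r}\bar h = 0,\qquad \bar h(\bar r,0) = \delta_0(\bar r).
\end{equation*}
Extending $\bar h$ by zero for $\bar t<0$ and absorbing the initial datum as a distributional source gives
\begin{equation*}
\partial_{\bar t}\bar h + D_{(0,\bar t]}^{1-\beta}\partial_{\bar r}\bar h = \delta_0(\bar r)\delta_0(\bar t)
\end{equation*}
on the whole $(\bar r,\bar t)$-plane.

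Next, by \eqref{time_scale} the PDF of $E_s$ is $h(r,s) = \bar h(T-r,T-s)$, so the task reduces to transforming the above equation under $(\bar r,\bar t) = (T-r,T-s)$. The classical derivatives flip sign, $\partial_{\bar t}\bar h = -\partial_s h$ and $\partial_{\bar r}\bar h = -\partial_r h$, and the point source transforms as $\delta_0(T-r)\delta_0(T-s) = \delta_T(r)\delta_T(s)$. For the nonlocal piece, substituting $\tau = T - s'$ inside the Riemann--Liouville convolution defining $D_{(0,\bar t]}^{1-\beta}$ and using $d/d\bar t = -d/ds$, together with the identity $D_{[s,T)}^{1-\beta}\varphi(s) = -(d/ds) I_{[s,T)}^\beta \varphi(s)$, a short computation yields
\[
D_{(0,T-s]}^{1-\beta}\bigl[\partial_{\bar r}\bar h(T-r,\cdot)\bigr](T-s) = -D_{[s,T)}^{1-\beta}\partial_r h(r,s).
\]
Substituting the three transformed pieces into the equation for $\bar h$ and multiplying through by $-1$ reproduces exactly \eqref{B3}. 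For the converse implication in the ``iff'', uniqueness of the weak solution of \eqref{B3} with the prescribed distributional source follows, via the same change of variables, from uniqueness of the forward Cauchy problem for $\bar h$, which in turn is classical and can be read off from the Laplace transform representation of the inverse stable subordinator density.

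The main obstacle will be the bookkeeping in this middle step: since $D_{[s,T)}^{1-\beta}$ is nonlocal, the change of variables cannot be performed formally on the operator symbol but must be carried out inside the defining convolution with signs tracked carefully. The distributional interpretation of the point source and the commutation of $\partial_r$ with the fractional $s$-operator should both be routine by pairing against a smooth test function and using the duality identities established in Lemma \ref{byparts}.
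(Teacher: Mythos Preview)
Your plan is correct and would yield \eqref{B3}, but it follows a different route from the paper. The paper starts from the Caputo-form equation $\partial_{(0,s]}^{\beta}\bar h=-\partial_r\bar h$ for the forward density (citing \cite{bar,mst}), pushes this through the time reversal to obtain $\partial_{[s,T)}^{\beta}h=\partial_r h$, and then uses the Riemann--Liouville/Caputo relation \eqref{B1} to rewrite this as the intermediate identity
\[
D_{[s,T)}^{\beta}h(r,s)=\partial_r h(r,s)+\frac{(T-s)^{-\beta}}{\Gamma(1-\beta)}\delta_T(r),
\]
after which the bulk of the argument is an explicit computation showing that this intermediate identity and \eqref{B3} are equivalent (integrating in $s$, applying $I_{[s,T)}^{1-\beta}$, differentiating, and conversely applying $D_{[s,T)}^{1-\beta}$). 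You instead take the Riemann--Liouville form of the forward FP equation directly---obtained as the degenerate case $b\equiv1$, $\sigma\equiv0$ of Theorem \ref{thm_FFP1}---absorb the initial datum as a $\delta$-source, and then carry the time reversal through the convolution kernel in one step, landing on \eqref{B3} without any intermediate equation. Your route is shorter and keeps the argument internal to the paper (no separate appeal to \cite{bar,mst}), at the price of having to justify carefully the commutation $\partial_{\bar r}D_{(0,\bar t]}^{1-\beta}=D_{(0,\bar t]}^{1-\beta}\partial_{\bar r}$ and the distributional absorption of the initial condition; the paper's route, by isolating the Caputo form first and the $\beta$-order identity second, makes the appearance of the boundary term $\frac{(T-s)^{-\beta}}{\Gamma(1-\beta)}\delta_T(r)$ more transparent and connects directly to the formulations in \cite{mst}. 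Either way the ``if'' direction ultimately rests on uniqueness for the forward density, which both arguments import from the literature.
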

\begin{proof}
Recalling that by definition $E_s= T - \bar E_{T-s}$, since the PDF $\bar h(r,s)$ of the process $\bar E_s$
satisfies (see \cite{bar,mst})
\[
\partial_{(0,s]}^{\beta}  \bar h(r,s) =- \partial_r  \bar h(r,s),
\]
it follows that
\[
\partial_{[s,T)}^{\beta}   h(r,s) = \partial_r  h(r,s).
\]
By \eqref{B1},  the previous equality can be equivalently  rewritten also as
\begin{equation}\label{B2}
D_{[s,T)}^\b  h(r,s) = \partial_r h (r,s) + \frac{(T-s)^{-\b}}{\G(1-\b)}\delta_T(r).
\end{equation}
We prove that  \eqref{B3} implies \eqref{B2}. Integrating \eqref{B3} from $s$ to $T$ we get
$$ h(r, T^-) -  h(r,s) = - I_{[s,T)}^{\beta} \partial_r  h(r,s) -\delta_T(r)H(T-s),$$
where $H$ is the Heaviside function. Observe that $ h(r, T^-) = h(T-r, 0^+) = 0$ (see \cite{mst} for more details). Then, applying $I_{[s,T)}^{1-\beta}$ on both sides, we get
$$-I_{[s,T)}^{1-\beta} h(r,s) = -\int_t^{T}\partial_r  h(r, \tau)d\tau - \delta_T(r)\frac{(T-s)^{1-\b}}{\G(2 - \b)}.$$
Taking the derivative $\frac{\partial}{\partial s}$, we finally get \eqref{B2}.\\
A similar computation shows that \eqref{B2} implies \eqref{B3}: as in \cite{mst}, applying the    fractional derivative in the sense of distribution to \eqref{B2} and using the identity  $D_{[s,T)}^{1-\beta}\partial_{[s,T)}^{\beta}\cdot  = \partial_s \cdot $, we obtain \eqref{B3}.
\end{proof}
\begin{proposition}\label{thm1}
Assume that   $f, L: \R^d \times [0,T] \times U \to     \R^d$,
are    continuous functions  satisfying
\begin{align}
     |f(t,x,u) - f(t,y,u)| +|L(t,x,u) - L(t,y,u)| &\leq  C |x - y| & \forall t \in [0,T],\, x,y\in\R^d, \,u\in U,\label{hype00c}\\
     |f(t,x,u)|+|L(t,x,u)| & \leq   M & \forall x \in\R^d, u\in U,\label{hype11c}
\end{align}
for some positive  constant $C, M >0$ and that  the value function $v$ defined in \eqref{value} is smooth. Then it is a classical solution of
\eqref{HJBcap}.
\end{proposition}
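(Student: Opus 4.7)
The plan is to derive the equation from the dynamic programming principle \eqref{dpp} by applying it on an infinitesimal time interval, and to translate the resulting local It\^o computation into the non-local Caputo operator $\fdT$ by means of the time-change representation $X_s = Y_{E_s}$ together with the density identity of Lemma \ref{lem:density}.

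Concretely, I would fix a constant test control $u \in U$, take $\theta = t+h$ in \eqref{dpp}, and obtain the infinitesimal inequality
$$0 \leq \E_{x,t}\Big[\int_t^{t+h} L(s, X_s, u)\, dE_s + v(t+h, X_{t+h}) - v(t, x)\Big].$$
Since $v$ is assumed smooth, I would then apply the It\^o formula for the subordinated semimartingale $X_s = Y_{E_s}$ from \cite{kobayashi}, obtaining
$$v(t+h, X_{t+h}) - v(t, x) = \int_t^{t+h} \partial_s v(s, X_s)\, ds + \int_t^{t+h}\bigl[\nu \Delta v + D v \cdot f(s, X_s, u)\bigr]\, dE_s + M_h,$$
with $\E[M_h] = 0$ under the boundedness hypotheses \eqref{hype00c}--\eqref{hype11c}.

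The heart of the argument is evaluating the expectations on the right. Using the independence of $Y$ and $E$ and conditioning on the trajectory of $E$, I would rewrite each expectation as an integral against the PDF $h(r,s)$ of $E_s$. The fractional PDE for $h(r,s)$ provided by Lemma \ref{lem:density} converts the $\partial_s$ derivative acting on the density into a Riemann--Liouville fractional derivative of $\partial_r h$, and the integration-by-parts identity of Lemma \ref{byparts} then transfers this fractional derivative onto $v$. Dividing by $h$, passing to the limit $h \to 0$ using the smoothness of $v$ and dominated convergence, and finally taking the infimum over $u \in U$ together with the definition \eqref{hamiltonian} of $\cH$, I would arrive at $\fdT v - \nu \Delta v + \cH(t, x, D v) \leq 0$; the reverse inequality follows analogously using an $\varepsilon$-optimal control in the DPP, yielding \eqref{HJBcap}.

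The hard part will be the conversion from the local It\^o expressions in $ds$ and $dE_s$ to the non-local operator $\fdT v$. This rests on the fractional equation for the density $h(r,s)$ in Lemma \ref{lem:density} (which encodes the subordination memory through the $(T-s)^{-\beta}$ singularity) and on the fractional integration-by-parts formula in Lemma \ref{byparts}. Careful handling of the boundary terms at $s=T$, where the $\delta_T\delta_T$ source in Lemma \ref{lem:density} lives, and near $r=0$, where $h(r,s)$ may degenerate, will require the most attention.
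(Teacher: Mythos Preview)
Your plan matches the paper's proof closely: DPP with a constant control, It\^o's formula from \cite{kobayashi} for the time-changed process, conversion of the $dE_s$-integrals via the density $h(r,s)$ and the fractional equation of Lemma~\ref{lem:density}, then an $\varepsilon$-optimal control for the reverse inequality. Two small points where the paper's execution differs from your sketch: (i) Lemma~\ref{byparts} is not the tool used here --- the paper works directly with the identity $\partial_s h = -\fDT\partial_r h - \delta_T\delta_T$ from Lemma~\ref{lem:density} and integrates by parts in $r$ against $\Phi(r)=\int_r^T(\cL_u v+L)\,dz$, which already outputs $\fDT[\cL_u v + L]$; (ii) what falls out first is the Riemann--Liouville form $-\partial_t v + \fDT[\cL_u v + L]\le 0$, and a further step is needed --- applying $I_{[t,T)}^{1-\beta}$ and the regularized-Caputo relation \eqref{B1} --- to reach the Caputo equation $\fdT v - \cL_u v - L \le 0$ (this is the computation \eqref{calcoli1} in the paper and is not entirely automatic).
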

\begin{proof}
Given $(t,x) \in  [0,T)\times\R^d$ and $u\in U$, consider the constant process $u_t\equiv u$ for all $t \geq 0$ and let $X_t$
 be the corresponding solution of \eqref{stabledp}. For $\eta>0$, define the stopping time
$$\theta_\eta := \inf\{s>t : (s - t, X_s - x) \not\in [0,\eta)\times B_\eta \},$$
where $B_\eta$ is the ball in $\R^d$ with radius $\eta$. From the dynamic programming principle \eqref{dpp}, it follows that
\begin{equation}\label{f0}
\E_{x,t}\left[ v(t, X_t) - v(\theta_\eta, X_{\theta_\eta} )  \right] \leq \E_{x,t}\left[\int_t^{\theta_\eta}L(s, X_s, u)dE_s  \right].
\end{equation}
The right hand side term in \eqref{f0} can be rewritten in the following way
\begin{equation}\label{f1}
\begin{split}
\E_{x,t}\left[\int_t^{\theta_\eta}L(s, X_s, u)dE_s\right]  = \E_{x,t}\left[\int_{E_t}^{E_{\theta_\eta}}L(D_z, Y_z, u)dz\right]\\
 =- \int_{-\infty}^T\E_{x,t}\left[\left(\int_r^T L(D_z, Y_z, u)dz\right)(h(r,\theta_\eta) - h(r,t))\right]dr,
 \end{split}
\end{equation}
where $D_z$ is the process such that $E_{D_z} = z$, $h(\cdot, t)$ is the probability density function of $E_t$  and $Y_t$ is given by \eqref{dp}.
For a fixed $u\in U$   consider  the linear second order operator
\begin{equation*}
\mathcal{L}_u v(t,x):=     \nu     \Delta   v(t,x) +f(t,x,u )  D  v(t,x).
\end{equation*}
By Ito's formula (see \cite{kobayashi})
\begin{align*}
&\E_{x,t}\left[ v(\theta_\eta, X_{\theta_\eta}) - v(t, X_t) \right] =\E_{x,t}\left[\int_t^{\theta_\eta}d v(s, X_s) \right]\\
 &= \E_{x,t}\left[\int_t^{\theta_\eta} \partial_s v(s, X_s)ds + \int_t^{\theta_\eta}D(v(s, X_s))dX_s + \frac{1}{2}\int_{t}^{\theta_\eta}D^2v(s, X_s)d\langle X_s\rangle \right]\\
&=\E_{x,t}\left[\int_t^{\theta_\eta} \partial_s v(s, X_s)ds \right.+ \int_{t}^{\theta_\eta}\left(f(s,X_s,u)\cdot D v(s, X_s) +\nu \Delta  v(s, X_s) \right)dE_s \\
 &\left.+ \int_t^{\theta_\eta} \sqrt{2\nu}D v(s, X_s)dB_{E_s}\right] = \E_{x,t}\left[\int_t^{\theta_\eta} \partial_t v(s, X_s)ds + \int_{t}^{\theta_\eta}\mathcal{L}_u v(s, X_s)dE_s\right]\\
&=\E_{x,t}\left[\int_t^{\theta_\eta} \partial_t v(s, X_s)ds\right] -\int_{-\infty}^T\E_{x,t}\left[\left(\int_r^T \mathcal{L}_u v(D_z, Y_z)dz\right)(h(r,\theta_\eta) - h(r,t))\right]dr .
\end{align*}
Substituting the previous identity and \eqref{f1} in \eqref{f0} and dividing by $\eta$ on  both sides we get
\begin{align*}
 - \E_{x,t}\left[\frac{1}{\eta}\int_t^{\theta_\eta} \partial_t v(s, X_s)ds\right] &+ \int_{-\infty}^T\E_{x,t}\left[\left(\int_r^T \mathcal{L}_u v(D_z, Y_z)dz\right)\frac{h(r,\theta_\eta) - h(r,t)}{\eta}\right]dr \leq\\&\leq- \int_{-\infty}^T\E_{x,t}\left[\left(\int_r^T L(D_z, Y_z, u)dz\right)\frac{h(r,\theta_\eta) - h(r,t)}{\eta}\right]dr.
\end{align*}
Sending  $\eta\to 0^+$, since $\theta_\eta \to t$, by  the dominated convergence theorem we have
\begin{align*}
\E_{x,t}\left[ \frac{1}{\eta} \int_t^{\theta_\eta}\partial_t v(s,X_s)ds \right] \underset{h \to  0^+}{ \longrightarrow} \E_{x,t}\left[\partial_t v(t, X_t)  \right] = \partial_t v(t, x);\\
\int_{-\infty}^T\E_{x,t}\left[\left(\int_r^T (\mathcal{L}_u v(D_z, Y_z) + L(D_z, Y_z, u))dz\right)\frac{h(r,\theta_\eta) - h(r,t)}{\eta}\right]dr  \underset{\eta \to  0^+}{ \longrightarrow}\\
\int_{-\infty}^T\E_{x,t}\left[\left(\int_r^T (\mathcal{L}_u v(D_z, Y_z) + L(D_z, Y_z, u))dz\right) \partial_t h(r,t)\right]dr
\end{align*}
Set $\Phi(r) = \int_r^T (\mathcal{L}_u v(D_z, Y_z) + L( D_z,Y_z, u))dz$. Then by \eqref{B3}
\begin{align*}
\int_{-\infty}^T&\E_{x,t}\left[\Phi(r) \partial_r h(r,t)  \right]dr \\
&=  -\int_{-\infty}^T\E_{x,t}\left[\Phi(r)\fDT\partial_t h(r,t)   \right]dr - \E_{x,t}[\Phi(T)\delta_T(t) ]\\
&= -\int_{-\infty}^T\E_{x,t}\left[\fDT\left(\Phi(r)\partial_t h(r,t)  \right) \right]dr\\
&=- \E_{x,t}\left[ \fDT\left(\Phi(r)h(r,t)|_{-\infty}^T - \int_{-\infty}^T\partial_r \Phi(r)h(r,t)dr \right)  \right]\\
&= \E_{x,t}\left[\fDT\int_{-\infty}^T(L(D_r, Y_r, u) + \mathcal{L}_u v(D_r, Y_r))h(r,t)dr  \right]\\
&=\E_{x,t}\left[\fDT[ L(t, X_t, u) + \mathcal{L}_u v(t, X_t)]   \right]\\
&=\fDT[L(t,x,u) + \mathcal{L}_u v(t,x)].
\end{align*}
Then we have
\begin{equation}\label{bb}
 -\partial_t v(t,x) + \fDT[L(t,x,u) + \mathcal{L}_u v(t,x)] \leq 0, \qquad \forall (t,x)\in [0,T)\times \R^d.
\end{equation}
Set $\cF (t,x,u):= L(t,x,u) + \mathcal{L}_u v(t,x)$ and $v_T(x):=v(T,x)$. Recalling that
by definition $\fDT\, \cdot= -\frac{d}{dt}\left[\fIT\,\cdot\right]$, we can rewrite
\eqref{bb} as
$$-\partial_t v -  \frac{d}{dt}\left[ \fIT \cF (t,x,u)\right] \leq 0.$$
Applying $I_{[t,T)}^{1-\beta}$, we get
\begin{align*}
  \fdT v - I_{[t,T)}^{1-\beta}\fDT\cF(t,x,u) \leq 0.
\end{align*}
Moreover by \eqref{B1}
\begin{align}
I_{[t,T)}^{1-\beta}\fDT\cF(t,x,u) &=  I^{1-\beta}_{[t,T)}\left(\partial_{[t,T)}^{1-\beta}\cF(t,x,u) + \frac{(T-t)^{\beta - 1}}{\G(\beta)}\cF(T,x,u)\right)\nonumber\\
&= - I^{1-\beta}_{[t,T)}I^{\beta}_{[t,T)}\frac{d}{dt}\cF(t,x,u) + \frac{\cF(T,x,u)}{\G(\b)}I^{1-\beta}_{[t,T)}(T-t)^{\beta - 1}\label{calcoli1}\\
&= - \int_t^T \partial_s \cF(s,x,u)ds + \cF(T,x,u)\frac{\G(\b)\G(1-\b)}{\G(\b)\G(1-\b)}\nonumber\\
&= - \cF(T,x,u) + \cF(t,x,u) + \cF(T,x,u) = \cF(t,x,u).\nonumber
\end{align}
Hence
$$ \partial_{(t,T]}^{\beta}v(t,x) - \cF(t,x,u) \leq 0.$$
Since the previous inequality holds for any $u\in U$, we finally obtain
\[ \fdT v -\nu \Delta v +\cH(t,x,D v) \leq 0.\]
To complete the proof,   consider $t_0,t\in (0,T)$ such that $0 \leq t_0 < t \leq T$ and  such that $\eta = t - t_0$   small
and, for any $\epsilon > 0$, consider an $\epsilon$-optimal control $u^\epsilon$  such that
$$\E_{x_0,t_0}\left[\int_{t_0}^{t} L(s,X_s,u^\epsilon_s)dE_s  + v(t, X_{t})\right]\leq v(t_0, X_{t_0}) + \epsilon \eta;$$
due to \eqref{hype00c} and \eqref{hype11c}, we can apply the previous argument even if $u^{\epsilon}_t$ is not constant; then, dividing for $\eta$, from the Ito's formula it follows, for $\eta \to 0^+$,
$$-\partial_t v(t,x) + \fDT[L(t,x,u^\epsilon) + \mathcal{L}_u v(t,x)] \geq -\epsilon,$$
and, applying the same argument used in \eqref{calcoli1}, we get the thesis for the arbitrariety of $\epsilon$.
\end{proof}
We now discuss  the  notion of solution for \eqref{HJBcap} introduced  in \cite{kv2}.
Consider the space $C^p_\infty(\R^d)$, $p\ge 0$   given by the  functions $f \in C^p(\R^d)$ such that $f$ and its derivates
up to order $p$  are rapidly decreasing   functions on $\R^d$. To introduce a notion of solution for \eqref{HJBcap}, we first
consider a linear equation of the form
\begin{equation}\label{linear}
\left\{
\begin{array}{l}
\fdT w(t,x)- \nu\Delta w(t,x)= \ell(t,x),\\
 w(T,x)=w_T(x)
\end{array}
\right.
\end{equation}
for  given continuous functions  $w_T: \R^d\to\R$, $\ell:(0,T)\times\R^d\to\R$. If $w_T\in C^0_\infty(\R^d)$, a solution of \eqref{linear} can be written in the integral form
\begin{align}\label{f}
w(t,x)=\int_{\mathbb{R}^{d}}S_{\beta,1}(t, x-y)w_T(y)dy + \int_{t}^{T} \int_{\mathbb{R}^{d}}G_{\beta}(s-t, x-y ) \ell (s,y)dy ds.
\end{align}
where $S_{\beta, 1}( t, x-x_{0})$ and $G_{\beta}( T-t, x-x_{0})$ are  the solutions of equation  \eqref{linear} with $w_T(x)= \delta(x-x_{0})$, $\ell(t,x)=0$ and, respectively,  $w_T(x)=0$, $\ell(t,x)=\delta(T- t, x-x_{0})$ (in the case   $\beta = 1$, $G_{\beta}$ and $S_{\beta,1}$ coincide).
Explicit formula for $G_{\beta}$ and $S_{\beta,1}$ can be obtained in terms of the Fourier transform of Mittag-Leffler functions
(see \cite{kv2} for details). By formula \eqref{f},
it is natural to introduce  the following notion of solution for \eqref{HJBcap}
\begin{definition}\label{mild}
Given a continuous function $g\in C^0_{\infty}(\mathbb{R}^{d})) $, we say that $v\in C^0([0,T], C^{1}_{\infty}(\mathbb{R}^{d}))$ is a mild solution of \eqref{HJBcap} satisfying the terminal condition $v(x,T)=g(x)$  if
\[
v(t,x)=\int_{\mathbb{R}^{d}}S_{\beta,1}(t, x-y)g(y)dy + \int_{t}^{T} \int_{\mathbb{R}^{d}}G_{\beta}( s-t, x-y)\cH(s,y, D v(s,y))dy ds.
\]
\end{definition}
We quote  from  \cite{kv2}  the following existence and uniqueness result
\begin{theorem}\label{thm_HJB}
Assume that
\begin{itemize}
\item $\cH(s,x,p)$ is Lipschitz in $p$ with   Lipschitz constant $L_{1}$, i.e.
\begin{equation}\label{H1}
|\cH(s,x, p) - \cH(s,x, q)| \le L_{1}|p-q|.
\end{equation}
\item $\cH(s,x,p)$ is Lipschitz in $x$ with  Lipschitz constant $L_{2}$, i.e.
\begin{equation}\label{H2}
|\cH(s, x_{1}, p) - \cH( s, x_{2}, p)| \le L_{2}|x_{1}-x_{2}|(1 + |p|).
\end{equation}
\item $|\cH(s,x,0)| \le M_1$, for a constant $M_1$ independent of $x$.
\item $g  \in C^{2}_{\infty}(\mathbb{R}^{d})$.
\end{itemize}
Then there exists a unique  mild solution $v$ of  \eqref{HJBcap}. Moreover
$v\in C^0([0,T], C^{2}_{\infty}(\mathbb{R}^{d}))$ and
\[
{\sup}_{x}|D^{2}v(t,x))| \le  C
\]
with $C$ depending on $L_1$, $L_2$, $M_1$.
\end{theorem}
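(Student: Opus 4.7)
The plan is to prove existence and uniqueness by a Banach fixed point argument applied to the nonlinear operator
\[
\cT[v](t,x) := \int_{\R^d} S_{\b,1}(t,x-y)g(y)\,dy + \int_t^T\!\int_{\R^d} G_\b(s-t,x-y)\,\cH(s,y,Dv(s,y))\,dy\,ds
\]
suggested by Definition \ref{mild}, working on the Banach space $X := C^0([0,T],C^1_\infty(\R^d))$ endowed with the weighted norm $\|v\|_\l := \sup_{t\in[0,T]} e^{-\l(T-t)}\bigl(\|v(t)\|_\infty + \|Dv(t)\|_\infty\bigr)$. The parameter $\l>0$ will be chosen large to turn $\cT$ into a strict contraction on the whole interval $[0,T]$ at once, avoiding Picard iteration on subintervals.

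The first step is to record the $L^1$-in-space estimates on $S_{\b,1}$, $G_\b$ and their spatial derivatives, obtained from the Fourier/Mittag-Leffler representations discussed just before the definition. Using the built-in self-similarity $x\sim t^{\b/2}$ one expects $\|S_{\b,1}(t,\cdot)\|_{L^1}\le 1$, $\|D^k S_{\b,1}(t,\cdot)\|_{L^1}\le Ct^{-k\b/2}$ and $\|D^k G_\b(s,\cdot)\|_{L^1}\le Cs^{\b-1-k\b/2}$ for $k\le 2$; the exponents for $k=0,1$ are integrable at $0$, while $k=2$ is borderline. This is the technical input from \cite{kv2} that makes the whole scheme work.

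For the contraction step, the assumptions \eqref{H1} and $|\cH(s,x,0)|\le M_1$ yield $|\cH(s,y,p)|\le L_1|p|+M_1$. Differentiating $\cT[v](t,x)$ in $x$ by placing at most one derivative onto $G_\b$, Young's inequality in the space variable gives
\[
\|\cT[v_1]-\cT[v_2]\|_\l \le L_1\,C\!\int_0^{T}\!\!\bigl(\t^{\b-1}+\t^{\b/2-1}\bigr)e^{-\l\t}\,d\t\;\|v_1-v_2\|_\l,
\]
and the coefficient tends to $0$ as $\l\to\infty$. Existence and uniqueness of a fixed point $v\in X$ follow. Invariance of $X$ under $\cT$ is checked separately: rapid decay of $g$ propagates through convolution with $S_{\b,1}$, and the source term has integrable-in-$s$ contribution bounded by $L_1\|Dv\|_\infty+M_1$, uniformly in $x$ with decay at infinity.

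To upgrade to $v\in C^0([0,T],C^2_\infty)$ with the claimed sup-bound on $D^2 v$, I would differentiate the fixed-point identity twice in $x$. In the nonlinear term, rather than placing both derivatives on $G_\b$ (which would produce the non-integrable singularity $(s-t)^{-1}$), I would transfer one derivative onto the composition and use
\[
D_x\bigl(\cH(s,y,Dv(s,y))\bigr) = (D_x\cH)(s,y,Dv) + (D_p\cH)(s,y,Dv)\,D^2 v,
\]
which by \eqref{H1}--\eqref{H2} is controlled pointwise by $L_2(1+|Dv|)+L_1|D^2 v|$. This produces a fractional Volterra inequality for $\Psi(t):=\sup_x|D^2 v(t,x)|$ with integrable kernel $\sim(s-t)^{\b/2-1}$, and a fractional Gronwall lemma closes the estimate, giving the bound depending only on $L_1,L_2,M_1$. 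The hard part is precisely this $D^2$ step: one cannot afford two derivatives on $G_\b$, so assumption \eqref{H2} with its factor $(1+|p|)$ is essential to shift a derivative off the kernel, and a fractional (not classical) Gronwall lemma is needed to close the loop because the remaining kernel $(s-t)^{\b/2-1}$ still has a singularity at the diagonal.
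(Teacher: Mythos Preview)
The paper does not actually prove this theorem: it is stated as a quotation from \cite{kv2} (with the $C^2$ bound attributed later to \cite[Thm.~11]{kv1}), and no argument is given in the paper itself. So there is no ``paper's own proof'' to compare against; your outline is in fact a sketch of the proof that lives in those cited references.

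Your strategy is the right one and matches the Kolokoltsov--Veretennikova approach: Banach fixed point for $\cT$ on $C^0([0,T],C^1_\infty)$ using the $L^1$ kernel bounds $\|D^k G_\b(s,\cdot)\|_{L^1}\lesssim s^{\b-1-k\b/2}$, then a bootstrap to $C^2$ by shifting one spatial derivative onto the nonlinearity and closing with a fractional Gronwall inequality against the kernel $(s-t)^{\b/2-1}$. One point to be careful about: in the $D^2$ step you write $D_x\bigl(\cH(s,y,Dv)\bigr)=(D_x\cH)+(D_p\cH)D^2v$, which presupposes $\cH\in C^1$, whereas the theorem only assumes Lipschitz continuity \eqref{H1}--\eqref{H2}. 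In the cited proof this is handled by working with finite differences (or by a standard mollification/limit argument) rather than pointwise derivatives of $\cH$; the Lipschitz constants $L_1,L_2$ then play exactly the role you assign to $|D_p\cH|$ and $|D_x\cH|$, and the Volterra--Gronwall closure goes through unchanged. With that adjustment your plan is complete.
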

\begin{remark}
Assume that   $f, L$ satisfiy \eqref{hype00c}-\eqref{hype11c} and  the final cost $g$ is in $C^2_\infty(\R^N)$. Then the Hamiltonian $\cH$
defined in \eqref{hamiltonian} satisfies the assumptions of the previous theorem and therefore there exists a unique
mild solution to \eqref{HJBcap}   satisfying the terminal condition $v(T,x)=g(x)$.
Moreover a straightforward application of the  Verification Theorem, see e.g. \cite[Thm. 5.1]{yz} allows to conclude
that the mild solution coincides with the value function defined in \eqref{value}.
\end{remark}
\begin{remark}\label{rem_time}
The choice of the integration with  respect to $E_t$ in \eqref{Jcost}  is justified   for modeling purposes, since we want to calculate the cost when the particle is effectively moving. Note that HJB equation so obtained coincides    with the one
studied in \cite{gn,kv1,kv2,ty}. However,   considering integration with respect to the  standard time in \eqref{Jcost}, i.e.
\begin{equation*}
J(t,x,u) := \mathbb{E}_{x,t}\left[\int_t^T L(s,X_s, u_s)ds + g(X_T)   \right]
\end{equation*}
 and  repeating  an   argument similar to the one in the proof of Prop. \ref{thm1}, we find out that in this case  the value function $v$ satisfies
 the equation
\begin{equation*}
\fdT v  -\nu\Delta v+ \sup_{u \in U} \left\{  -f(t,x,u)\cdot Dv  - \fIT L(t,x,u)\right\}=0.
\end{equation*}
\end{remark}

 \section{Fractional Mean Field Games}\label{sec_MFG}
In this section we introduce the MFG system and we prove existence and uniqueness of a classical solution
to the problem.\\
Consider a population of indistinguishable agents  distributed
at time $t = 0$ according to the   density function $m_0$.
Each agent moves with a dynamics given by the time-changed SDE
\[
dX_s = f(s, X_s, u_s)dE_s +  \sqrt{2\nu}dB_{E_s} \qquad s \in (t,T],
\]
and aims to minimize the pay-off functional
\begin{equation}\label{cost_interaction}
 \mJ(t,x)=\E_{x,t}\left[\int_t^T [L(s,X_s, u_s)+G(X_s,m)]dE_s+g(X_T)\right],
\end{equation}
where   the additional term $G$ represents  a cost  depending on the distribution of the population at time $s$.
Given a distribution $m \in \cM^+([0,T]\times\R^d)$, by  Proposition \ref{thm1}  the
backward Hamilton-Jacobi equation associated to the previous control problem is
\begin{equation}\label{eqn:one}
\begin{split}
\fdT v(t,x)  -\nu \Delta v+\cH(t,x, D  v )-  G(x, m ) =0,\hspace{3 mm}(t,x) \in (0,T) \times \R^d,\\
\end{split}
\end{equation}
with the terminal condition $v(T,x) = g(x)$, where $\cH$ is defined as in \eqref{hamiltonian}.  \\
We  show that the time-fractional FP equation governing the evolution of the distribution $m$
 can be obtained by a standard duality argument in MFG theory.
 \begin{proposition}\label{prop_equiv}
The equation \eqref{eqn:one} is equivalent to the equation
     \begin{equation}\label{HJBfr}
-\partial_t v(t,x) + \fDT[-\nu \Delta v +\cH(t,x, D  v )-G(x,m)] = 0 \qquad (t,x) \in [0,T)\times\R^d,
\end{equation}
 where $\fDT$ denotes the backward Riemann-Liouville derivative \eqref{bRL}.
\end{proposition}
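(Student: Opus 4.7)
The plan is to pivot on the integral (mild) formula
\[v(t,x) = g(x) - \fIT F(t,x), \qquad F(t,x) := -\nu\Delta v + \cH(t,x,Dv) - G(x,m),\]
which is precisely the mild-solution representation \eqref{f} from Section \ref{sec_HJB}. I will show that each of \eqref{eqn:one} and \eqref{HJBfr}, together with the terminal condition $v(T,\cdot)=g$, is equivalent to this single formula; equivalence between the two PDE forms then follows.

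Starting from \eqref{HJBfr}, the defining identity $\fDT F = -\pd_t \fIT F$ rewrites the equation as $\pd_t[v + \fIT F] = 0$. Integrating on $[t,T]$ and using the terminal condition together with $\lim_{s\to T^-} \fIT F(s,\cdot)=0$ (immediate from the H\"older estimate $\|\fIT F\|_{L^p}\le (T-t)^{\beta/p}\|F\|_{L^p}/(\beta\Gamma(\beta))$ of Section \ref{sec_frac}) yields the integral formula. The reverse implication is just differentiation: $\pd_t v = -\pd_t \fIT F = \fDT F$. Similarly, applying $\fIT$ to \eqref{eqn:one} written in the form $-I^{1-\beta}_{[t,T)}\pd_t v = -F$, and using the semigroup rule $\fIT \,I^{1-\beta}_{[t,T)} = I^1_{[t,T)}$, collapses the left-hand side to $v(t,x)-g(x)$, reproducing the integral formula.

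The one less-trivial converse---recovering \eqref{eqn:one} from the integral formula---amounts to checking that $\fdT \fIT F = -F$, equivalently $I^{1-\beta}_{[t,T)}\fDT F = F$. This is exactly the telescoping computation performed in \eqref{calcoli1} in the proof of Proposition \ref{thm1}: use \eqref{B1} to convert $\fDT F$ into a Caputo derivative plus the boundary term $F(T,x)(T-t)^{\beta-1}/\Gamma(\beta)$; combine the semigroup law $I^{1-\beta}_{[t,T)}\fIT = I^1_{[t,T)}$ with the Beta-function evaluation $I^{1-\beta}_{[t,T)}(T-\cdot)^{\beta-1}(t)=\Gamma(\beta)$; and observe that the two $F(T,x)$ contributions cancel, leaving $F(t,x)$. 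The main point to watch is the bookkeeping of the boundary values at $t=T$ produced by the switch between Riemann--Liouville and Caputo derivatives, which is precisely the role of \eqref{B1}; the regularity of $v$ and $F$ needed to justify all the manipulations is granted by the mild-solution framework of Theorem \ref{thm_HJB}.
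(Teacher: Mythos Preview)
Your argument is correct in substance and establishes both directions of the equivalence, but it takes a much longer route than the paper. The paper's proof is a single line: apply $D^{1-\beta}_{[t,T)}$ to \eqref{eqn:one} and use the identity $D^{1-\beta}_{[t,T)}\fdT\cdot = -\partial_t\cdot$, which follows immediately from the semigroup rule $I^\beta_{[t,T)}I^{1-\beta}_{[t,T)}=I^1_{[t,T)}$. This yields \eqref{HJBfr} directly; the converse is implicit in the left-inverse identity $I^{1-\beta}_{[t,T)}\fDT = \mathrm{id}$ that you yourself invoke via \eqref{calcoli1}. Your approach instead pivots through the integral identity $v = g - \fIT F$ and verifies four separate implications. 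The payoff is that both directions are spelled out explicitly; the cost is considerably more bookkeeping for what is, at bottom, the same pair of fractional-calculus identities.

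Two small corrections. First, your integral formula is \emph{not} the mild-solution representation \eqref{f}: formula \eqref{f} involves convolution with the fundamental solutions $S_{\beta,1}$ and $G_\beta$ of the fractional heat operator, whereas your identity $v = g - \fIT F$ is simply the time-integrated form of $\fdT v = -F$ and does not see the Laplacian at all. The argument still goes through, but the cross-reference is misleading. Second, you write ``$\fdT\fIT F = -F$, equivalently $I^{1-\beta}_{[t,T)}\fDT F = F$''; since $\fdT\fIT F = I^{1-\beta}_{[t,T)}\fDT F$, the first equality should read $\fdT\fIT F = F$ (a sign slip), which is what you actually need and what \eqref{calcoli1} delivers.
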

\begin{proof}
We get the thesis applying $D^{1-\beta}_{(t,T]}$ to \eqref{eqn:one} and recalling that $D^{1-\beta}_{(t,T]}\partial_{(t,T]}^{\beta}\cdot = - \partial_t\cdot.$
\end{proof}
For $\epsilon > 0$ and  $g_w \in C^2_{\infty}(\R^d)$,   write the  solution of \eqref{eqn:one} with terminal  data $g(x) + \epsilon g_w(x)$ as $v + \epsilon w$. Hence
$$
-\partial_t v - \epsilon \partial_t w + D_{[t,T)}^{1-\beta}[ - \nu \Delta v - \epsilon \nu \Delta w+\cH(t,x, D v + \epsilon D w) -G(x,m)] =0.
$$
By Taylor's expansion, the Hamiltonian term can be rewritten as
$$
\cH(t,x, D v + \epsilon D w) = \cH(t,x, D v) + \epsilon D_p\cH(t,x, D v)D w + o(\epsilon^2).
$$
Substituting in \eqref{eqn:one} and isolating the terms with the same  order in $\epsilon$, we  get the following
equation for $w$
\begin{equation}\label{eqn:two}
-\partial_t w + \fDT\left[ - \nu \Delta w+ D_p\cH(t,x, D v)D w \right]= 0, \quad(t,x) \in [0,T]\times\R^d\\
\end{equation}
Assume that $w$ is a weak solution of \eqref{eqn:two}. Integrating  with respect to a test function  $m \in C^{1}_c((0,T); C^2_{\infty}(\R^d))$,
we have
\begin{equation}\label{eqn:three}
\int_0^T \int_{\R^d} \left[-\partial_t w + \fDT\left( - \nu \Delta w+D_p\cH(t,x, D v)D w \right)\right] m(x,t)dxdt= 0.
\end{equation}
Then, taking into account \eqref{bypart2}, we get that \eqref{eqn:three} is equivalent to
\begin{equation}\label{id11}
 \int_0^T \int_{\R^d} w(x,t)\left[\partial_t m -\nu \Delta(\fDz m)-  \diver\left(D_p \cH(t,x, D v)\fDz m\right)\right] dxdt=0
 \end{equation}
By \eqref{id11}, we deduce the   time-fractional FP problem
\[
\left\{
\begin{array}{ll}
\partial_t m(t,x) = \cA \left[\fDz m(t,x)\right] \quad& (t,x) \in (0,T)\times \R^d,\\[4pt]
m(0,x) = m_0(x),&x\in\R^d,
\end{array}
\right.
\]
where
\[
\cA \cdot  =   \nu \Delta \cdot+ \diver (D_p \cH(t,x, D  v)\cdot)
\]
and $-D_p \cH(t,x, D  v)$ is  the optimal control obtained by \eqref{eqn:one},
is the adjoint of the linearized of the HJB equation, as in the standard MFG theory.\\
We are now ready to formulate the fractional Mean Field Game system as
\begin{equation}\label{MFGfr}
\left\{
\begin{array}{lll}
-\partial_tv+ \fDT[-\nu \Delta v  + \cH(t,x, D  v) -    G(x, m)]=0,& (t,x) \in (0,T) \times \R^d\\[4pt]
\partial_t m -  [\nu \Delta \cdot+ \diver (D_p \cH(t,x, D  v)\cdot)] (\fDz m) = 0,\\
m(0,x) = m_0(x) ,\quad v(T,x) = g(x).
\end{array}
\right.\end{equation}
\begin{remark}
We stress that \eqref{MFGfr} is  the correct form of MFG system preserving the duality relation between the two equations. Indeed,
if we consider the HJB equation \eqref{eqn:one} in place of the HJB equation \eqref{HJBfr} in the system \eqref{MFGfr}, a computation similar to
\eqref{eqn:three}-\eqref{id11} gives the FP equation
\[ D_{(0,t]}^{\beta} m -  \nu \Delta m - \diver (D_p \cH(t,x, D  v)m)  = 0\]
which is not well posed since the corresponding solution $m$ is not a normalized, non-negative distribution probability function (\cite{h}).
\end{remark}

\subsection{Uniqueness}
We assume that   $G: \R^d \times \cM^+(\R^d \times [0,T]) \to \R$ is a continuous function whose backward fractional  Riemann-Liouville derivative of order $1-\beta$ is monotone with respect to the $m-$variable, i.e.
\begin{equation}\label{monotonicity}
\int_0^T \int_{\R^d} (m_1 - m_2) \fDT\left(  G(x, m_1)   -   G(x, m_2)\right)dxdt> 0,
\end{equation}
for every $m_1, m_2 \in \cM^+([0,T]\times\R^d)$.  \\
For example, the previous condition is satisfied if $G(m) = I_{[t,T)}^{1-\beta}\g(m)$ with  $\g$  an  increasing function.
Moreover, if the interaction cost is computed with respect to the external time scale, hence  the term $\E_{x,t}\left[\int_t^T G(m_s) dE_s \right]$
in \eqref{cost_interaction} is replaced by $\E_{x,t}\left[\int_t^T G(m_s) ds\right]$, then \eqref{monotonicity} is satisfied if $G(m)$ is increasing in $m$,
as in the classical monotonicity condition in \cite{ll}.
\begin{theorem}
There exists a unique classical solution to the MFG system \eqref{MFGfr}.
\end{theorem}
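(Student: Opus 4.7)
The plan is to obtain existence by a Schauder-type fixed point argument and uniqueness through the classical Lasry-Lions duality trick adapted to the fractional framework, exploiting the integration-by-parts of Lemma \ref{byparts}.

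For \emph{existence}, I would work on the convex closed set
\[
\mathcal{K} = \Big\{m \in C^0([0,T];\cP_1(\R^d)) : d_1(m(t),m(s)) \le C_0|t-s|^{\beta/2},\ \textstyle\int_{\R^d}|x|^2 dm(t) \le R\Big\},
\]
with $C_0,R$ chosen so that the estimates \eqref{est_m1}--\eqref{est_m2} apply, and define $\Psi:\mathcal{K}\to\mathcal{K}$ as follows. Given $m\in\mathcal{K}$, Theorem \ref{thm_HJB} together with the subsequent remark delivers a unique mild solution $v \in C^0([0,T];C^2_\infty(\R^d))$ of the HJB equation in \eqref{MFGfr} with terminal datum $g$, with $\sup_{t,x}|D^2 v|$ uniformly bounded in $m$. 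The feedback $b(t,x):=-D_p\cH(t,x,Dv(t,x))$ is then bounded and Lipschitz in $x$, so Theorem \ref{thm_FFP1} produces a unique weak solution $\tilde m$ of the corresponding time-fractional FP equation; set $\Psi(m):=\tilde m$. I would verify in turn: (i) $\Psi(\mathcal{K})\subset\mathcal{K}$, which is immediate from the estimates \eqref{est_m1}--\eqref{est_m2} applied with constants depending only on $\|b\|_\infty$; (ii) continuity of $\Psi$ in the $C^0([0,T];\cP_1)$-topology, from continuous dependence of the mild formula on $G(\cdot,m)$ and stability of the subordinated SDE under drift perturbations; (iii) relative compactness of $\Psi(\mathcal{K})$, from the uniform Hölder bound (Ascoli-Arzelà) and tightness provided by \eqref{est_m2}. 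Schauder's theorem then yields $m^\ast = \Psi(m^\ast)$, and the associated pair $(v^\ast,m^\ast)$ is a classical solution of \eqref{MFGfr}.

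For \emph{uniqueness}, let $(v_1,m_1)$ and $(v_2,m_2)$ be two solutions and set $\bar v := v_1-v_2$, $\bar m := m_1-m_2$. Since $\bar v(T,\cdot)\equiv 0$ and $\bar m(0,\cdot)\equiv 0$, integrating $\tfrac{d}{dt}\int\bar v\,\bar m\,dx$ over $[0,T]$ yields
\[
0 = \int_0^T\int_{\R^d}[\bar m\,\partial_t\bar v + \bar v\,\partial_t\bar m]\,dx\,dt.
\]
I would substitute $\partial_t\bar v$ from the two HJB equations and $\partial_t\bar m$ from the two FP equations, apply Lemma \ref{byparts} to move $\fDT$ off $\bar m$ (producing $\fDz\bar m$), and integrate by parts in space: the Laplacian contributions then cancel and the divergence term turns into $-\int D\bar v \cdot (D_p\cH_1\fDz m_1 - D_p\cH_2\fDz m_2)$. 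Regrouping, with the shorthand $\cH_i := \cH(t,x,Dv_i)$ and $G_i := G(x,m_i)$, yields the key identity
\begin{align*}
& \int_0^T\!\!\int_{\R^d} \fDz m_1\,[\cH_1-\cH_2-D_p\cH_1\!\cdot\! D\bar v]\,dx\,dt \\
& \quad + \int_0^T\!\!\int_{\R^d} \fDz m_2\,[\cH_2-\cH_1+D_p\cH_2\!\cdot\! D\bar v]\,dx\,dt = \int_0^T\!\!\int_{\R^d}\bar m\,\fDT(G_1-G_2)\,dx\,dt.
\end{align*}
By convexity of $\cH$ in the gradient variable, both brackets on the left are pointwise nonpositive; provided $\fDz m_i\ge 0$ the left-hand side is therefore $\le 0$, whereas by the monotonicity assumption \eqref{monotonicity} the right-hand side is strictly positive unless $m_1=m_2$. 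This forces $m_1=m_2$, and Theorem \ref{thm_HJB} then gives $v_1=v_2$.

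The main delicate points are twofold. First, the sign $\fDz m_i\ge 0$ is what allows the pointwise convexity inequality to imply the global bound on the left-hand side of the identity; this does not follow from $m_i\ge 0$ alone but from the subordination representation $m_i(t,\cdot)=\mathrm{law}(Y^{i}_{E_t})$ combined with the explicit density of $E_t$ appearing in Lemma \ref{lem:density}. Second, Lemma \ref{byparts} is stated for compactly supported test functions, whereas $\bar v$ and $\bar m$ are not compactly supported: extending it requires a cutoff/approximation argument exploiting the $C^2_\infty$-decay of $\bar v$ and the tightness of $m_i$ from \eqref{est_m2}, together with the $L^p$-boundedness of the fractional integrals recalled in Section \ref{sec_frac}. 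Beyond these, the scheme is a structural adaptation of the classical MFG proof and no further conceptual difficulty is expected.
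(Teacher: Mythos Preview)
Your proposal is correct and follows essentially the same route as the paper: existence via a Schauder fixed-point on a H\"older-equicontinuous, second-moment-bounded subset of $C^0([0,T];\cP_1(\R^d))$ (proved in the paper as a separate theorem in \S\ref{sec_MFG}.2), and uniqueness via the Lasry--Lions duality computation, which in the paper is organized as ``multiply the $\bar m$-equation by $\bar v$, the $\bar v$-equation by $\bar m$, subtract'' rather than your equivalent ``integrate $\partial_t(\bar v\,\bar m)$'' formulation. You are in fact more careful than the paper on two points it glosses over: the nonnegativity of $\fDz m_i$ (which the paper uses without comment when asserting that all three terms are nonnegative) and the extension of Lemma~\ref{byparts} beyond compactly supported test functions.
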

\begin{proof}
We apply  a standard argument  in   MFG theory.  We assume that there exists two solutions $(v_1,m_1)$ and $(v_2,m_2)$ of \eqref{MFGfr}. We set $\bar v=v_1-v_2$,
$\bar m=m_1-m_2$ and we write the equations for $\bar v$, $\bar m$
\[\left\{\begin{array}{ll}
 \,-\partial_t \bar v  +  \fDT[- \nu \Delta \bar v +\cH(t,x, D v_1) - \cH(t,x, D v_2)-(G(x, m_1) - G(x,m_2))]=0 \\[2pt]
\partial_t \bar m - \nu \Delta(\fDz \bar m) - \diver(D_p \cH(t,x, D v_1)\fDz m_1)  + \diver(D_p \cH(t,x, D v_2)\fDz m_2) =0 \\[2pt]
       \bar m(0,x)=0,\,\bar v(t,x)=0
\end{array}\right.\]
Multiplying the equation for $\bar m$ by $\bar v$ and integrating  we get
\begin{equation}\label{U4}
\begin{split}
 & \int_0^T\int_{\R^d} \left[\bar v \partial_t \bar m + \nu D \bar v\cdot D \left(\fDz \bar m\right) \right. \\&\left.+ D \bar v \cdot\left(D_p \cH(t,x, D v_1) \fDz
  m_1 - D_p \cH(t,x, D v_2) \fDz m_2 \right)\right]dxdt=0.
  \end{split}
\end{equation}
Multiplying the equation for $\bar v$  by $\bar m$ and performing a computation similar to the one in \eqref{conto}, we have
\begin{equation}\label{U5}
\begin{split}
 &\int_0^T\int_{\R^d}\Big[\bar v \partial_t \bar m + \fDz\bar m \left(\cH(t,x, D v_1) - \cH(t,x, D v_2) \right) +\\&+  \nu D \left(\fDz \bar m\right)\cdot D \bar v - \bar m \,  \fDT\big(G(x, m_1) - G(x,m_2)\big)\big]dx dt=0.
\end{split}
\end{equation}
Subtracting \eqref{U4} to \eqref{U5}, we get
\begin{align*}
    &\int_0^T\int_{\R^d} (m_1-m_2)\, \fDT (G(x,m_1) -G(x,m_2))dxdt +\\
    & \int_0^T\int_{\R^d} \fDz m_1 \big(\cH (t, x,D v_2) - \cH (t, x,Dv_1) -D_p \cH (t, x,D v_1)D( v_2-v_1)\big)dx dt+\\
     &\int_0^T\int_{\R^d} \fDz m_2 \big(\cH (t, x,D v_1) - \cH (t, x,Dv_2) -D_p \cH (t, x,D v_2)D( v_1-v_2)\big)dxdt=0.
    \end{align*}
Since  each of the three terms in the previous identity is nonnegative, in view of assumption \eqref{monotonicity} it follows that $m_1=m_2$. By the uniqueness of the solution to  \eqref{HJBfr}, we finally get $v_1=v_2$.
\end{proof}
\subsection{Existence}
We now  prove  existence of a  classical solution to  the MFG system \eqref{MFGfr}. In this section we assume that  the Hamiltonian
$\cH\in C^1([0,T]\times\R^d\times\R^d)$ and satisfies  \eqref{H1}-\eqref{H2}, $g \in C^2_{\infty}(\R^d)$, $m_0\in \cP_1(\R^d)$ is such that $\int_{\R^d}|x|^2 dm_0<\infty$  and $G$  is uniformly bounded and Lipschitz continuous, i.e.
\begin{align*}
& |G( x_1,m_1) - G( x_2, m_2)| \leq C_1[|x_1 - x_2| + d_1(m_1, m_2)], \quad&&\forall   (x_1, m_1),(x_2, m_2) \in \R^d\times\cM^+_1(\R^d),\\
&|G( x,m)| \leq C_2, &&\forall ( x, m) \in \R^d\times\cM^+_1(\R^d),
\end{align*}
for some positive constant $C_1, C_2 > 0.$
\begin{theorem}
There exists a solution $(v,m)  \in C([0,T]; C^1_\infty(\R^d))\times C([0,T];\cP_1(\R^d))$  to \eqref{MFGfr},
where $v$ is a mild solution of the HJB equation and $m$ is a weak solution   of the FP equation.
\end{theorem}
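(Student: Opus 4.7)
The plan is to apply Schauder's fixed-point theorem to a map $\Psi: K \to K$, where $K \subset C([0,T];\cP_1(\R^d))$ is a convex, compact subset of measure-valued curves that are uniformly $\beta/2$-Hölder continuous and satisfy a uniform second-moment bound. The construction goes in three steps: given $m \in K$, solve the HJB equation \eqref{HJBfr} with the source $G(\cdot,m)$ by applying Theorem \ref{thm_HJB} to obtain a unique mild solution $v = v[m] \in C([0,T]; C^2_\infty(\R^d))$ with $\sup_x |D^2 v| \leq C$; next, use the Lipschitz bounded vector field $b(t,x) := -D_p \cH(t,x, D v[m](t,x))$ as drift in a subordinated SDE of the form \eqref{sdegno}; finally, define $\Psi(m): t \mapsto \mathrm{Law}(X_t)$, where $X_t = Y_{E_t}$ is the resulting time-changed process started from $m_0$. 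By Theorem \ref{thm_FFP1}, $\Psi(m)$ is the unique weak solution of the fractional FP equation with this drift, so any fixed point produces a solution of \eqref{MFGfr}.

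Identifying $K$ is the first concrete step. The regularity proposition at the end of Section \ref{sec_FP} shows that any law-curve produced by step three satisfies $d_1(m'_t, m'_s) \leq C|t-s|^{\beta/2}$ and $\int |x|^2 dm'_t \leq R$, with constants depending only on the $L^\infty$ and Lipschitz bounds on the drift. Since $\|b\|_\infty$ and $\mathrm{Lip}_x(b)$ are controlled uniformly in $m \in K$ thanks to the bound on $D^2 v[m]$ from Theorem \ref{thm_HJB} (which in turn uses $|G(\cdot,m)| \leq C_2$ and the Lipschitz constant $C_1$), I can take $K$ to be the set of curves meeting these two quantitative bounds with $m(0)=m_0$. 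Then $K$ is convex, while compactness in $C([0,T];\cP_1(\R^d))$ follows from Ascoli-Arzelà applied to the uniform Hölder modulus, combined with the tightness implied by the uniform second-moment bound.

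The main obstacle is the continuity of $\Psi$. Suppose $m_n \to m$ in $C([0,T];\cP_1(\R^d))$; by the Lipschitz hypothesis on $G$, $G(\cdot, m_n(t)) \to G(\cdot, m(t))$ uniformly on compact sets of $\R^d$ and uniformly in $t$. Writing $w_n := v[m_n] - v[m]$ via the integral representation of Definition \ref{mild} (with $\cH$ replaced by $\cH - G(\cdot,m)$ inside $G_\beta$), $w_n$ solves a mild equation whose source is $G(\cdot,m_n) - G(\cdot,m)$ and whose nonlinearity is Lipschitz in the gradient; the smoothing estimates on $G_\beta$ and $S_{\beta,1}$ from \cite{kv2}, together with a Gronwall-type argument, yield $\|D w_n\|_{C([0,T]\times\R^d)} \to 0$. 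This forces uniform convergence of the drifts $b_n \to b$ on compact sets, with uniform Lipschitz bounds. Stability of the time-changed SDE \eqref{sdegno} under such drift convergence, combined with the subordination $X_t = Y_{E_t}$ and the Kantorovich-Rubinstein representation of $d_1$, gives $\Psi(m_n) \to \Psi(m)$ in $C([0,T];\cP_1(\R^d))$.

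With $\Psi: K \to K$ continuous and $K$ convex and compact in $C([0,T];\cP_1(\R^d))$, Schauder's theorem produces a fixed point $m = \Psi(m)$, and the pair $(v[m], m)$ is the sought classical solution. The only delicate technical point beyond standard MFG arguments is the fractional mild-solution stability estimate, which replaces the usual parabolic regularity step; everything else, namely the construction of $K$, the tightness argument, and the passage to the limit in the subordinated SDE, proceeds by adapting the classical scheme to the time-changed dynamics already developed in Sections \ref{sec_FP} and \ref{sec_HJB}.
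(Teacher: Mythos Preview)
Your proposal is correct and follows essentially the same Schauder fixed-point scheme as the paper: both define the map on a convex compact set of $\beta/2$-H\"older measure curves with a uniform second-moment bound, solve HJB then FP, and close the loop via the estimates \eqref{est_m1}--\eqref{est_m2}. The only notable difference is in the continuity step: you argue via a direct Gronwall-type stability estimate on the mild formulation and SDE stability, whereas the paper uses the uniform $C^2$ bound from \cite{kv1} together with compactness and uniqueness (subsequential limits of $(v_n,Dv_n)$ and $m_n$ identified via uniqueness of the HJB and FP solutions); both routes are standard and lead to the same conclusion.
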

\begin{proof}
 We prove   existence of  a solution by  a fixed point  argument. Let $\cC$ be the subset of     $C([0,T], \cP_1(\R^d))$ given by
 distribution functions which are   $\frac{\beta}{2}-$Holder continuous with constant $M_1$ (to be fixed later) and such that
\begin{equation}\label{hypecomp}
\sup_{t \in [0,T]}\int_{\R^d}|x|^2 d\mu_t(x) \leq M_1.
\end{equation}
Since $\cP_1$ is a convex set, closed with respect to $d_1$, then  $\cC$ is convex and closed  with respect to the distance
 $\sup_{t \in [0,T]}d_1(\mu_t, \nu_t)$. It is also compact due to \eqref{hypecomp}.
We define a map  $\Phi: \cC \to  C([0,T], \cP_1(\R^d))$ in the following way:  \\
\emph{(i)} Given   $\mu \in \cC$,  consider the HJB equation
\[
\left\{
\begin{array}{ll}
-\partial_t v(t,x) + \fDT(- \nu \Delta v+ \cH(t,x, D v)-G(x, \mu) ) =0,& (t,x) \in (0,T) \times \R^d\\
v(T,x) = g(x).
\end{array}
\right.
\]
By Theorem \ref{thm_HJB} and Proposition \ref{prop_equiv}, there exist a unique mild solution $v=v(\mu) \in C^0([0,T]; C^1_\infty(\R^d))$.\\
 \emph{(ii)}
Given $v$ by the previous step, consider the
time-fractional    FP equation
\[
\left\{
\begin{array}{lll}
\partial_t m -   [ \nu \Delta\cdot+\diver(D_p \cH(t,x, D v)\cdot]\fDz m  = 0,\\
m(0,x) = m_0(x).
\end{array}
\right.
\]
By Theorem \ref{thm_FFP1}, there exists a unique weak solution $m \in C([0,T], \cP_1(\R^d))$.\\
Hence the map $m:=\Phi(\mu)$ defined by steps  \emph{(i)}-\emph{(ii)}  is well defined. Moreover, for
$$M_1 := \max\{c(\beta,M)(\E(|X_0|^2) + T^\beta + T^{\beta/2}), C(\beta, M)\},$$
see \eqref{est_m1}-\eqref{est_m2},   we have that  $m \in \cC$, hence  $\Phi$ maps $ \cC$ into itself.
We show that the map $\Phi$ is continuous. Consider a sequence $\mu_n \in \cC$ converging  to some $\mu\in \cC$ and let $(v_n, m_n)$, $(v, m)$ the corresponding  functions defined in steps \emph{(i)}-\emph{(ii)}.
\\
In  \cite[Thm.11]{kv1}, it is shown   that there exists a positive constant $C_0$    such that  the solution of the HJB equation is bounded in $C^2(\R^d)$ for every $t \in [0,T]$. Since the constant $C_0$ depends only on the bounds \eqref{H1}-\eqref{H2}, it follows that
$$\sup_{t \in [0,T]}\|v_n\|_{C^2(\R^d)} \leq C_0,$$
uniformly in $n\in \N$.
Hence we conclude that, up to a subsequence,  $v_n$ and $D v_n$ locally uniformly converge, respectively, to $v$ and $D v$.\\
It is easily seen that  any converging  subsequence   of the relatively compact sequence $m_n$  is  a weak solution of the FP equation associated to $v$. Since the  solution of this equation  is unique, we get that all the sequence $m_n$ converges to $m$ and therefore  $\Phi$ is continuous.\\
 By the Schauder fixed point Theorem, the map $\Phi$ admits a fixed point $m=\Phi(m)$ in $\cC$. It follows that
the corresponding couple $(v,m)$ defined in steps  \emph{(i)}-\emph{(ii)} is a solution of \eqref{MFGfr}.
\end{proof}
\begin{remark}
We consider    steady solution of \eqref{MFGfr}, i.e. solutions such that  $v(x,t) = v(x,T)$, $m(x,t) = m(x,0)$,  for all $t \in [0,T]$. If $(v,m)$ is a steady solution, then  \eqref{MFGfr} reduces to
 \begin{equation}\label{eqn:steadysys}
 \begin{split}
 \fDT(- \nu \Delta v+\cH (t,x, D v)  - G(x, m)) = 0, \\
 [\nu \Delta\cdot+ \diver(D_p \cH(t,x, D v)\cdot ] (\fDz m)   = 0.
 \end{split}
 \end{equation}
 The first equation in \eqref{eqn:steadysys} is equivalent to
 $$- \nu \Delta v+\cH(t,x, D v)  - G(x, m) = c(x) (T-t)^{1-\b}.$$
If $\cH$ does not depends on $t$, then the previous equation is satisfied   if and only if $c(x)\equiv 0$ and
 \begin{equation*}
- \nu \Delta v + \cH(x, D v) - G(x, m)=0, \quad x \in \R^d.
 \end{equation*}
 On the other hand, due to the independence of $m$ on the $t-$variable, it can be easily verified that
 $$D_{(0,t]}^{1-\beta} m = C_1(\beta)  t^{\beta - 1}m(x) ,$$
for some constant  $C_1 \not = 0$. Then, the second equation in \eqref{eqn:steadysys} is satisfied if
 \[
  \nu \Delta m+ \diver(D_p \cH( x, D v)m)  = 0.
 \]
We conclude that $(v,m)$ is a steady solution of \eqref{MFGfr} if it solves
 \[
 \left\{
 \begin{array}{ll}
 - \nu \Delta v + \cH(x, D v) = G(x, m), & x \in \R^d,\\
  \nu \Delta m + \diver(D_p \cH( x, D v)m)   = 0.
 \end{array}
 \right.
 \]
 Hence steady solutions of \eqref{MFGfr} coincide with the ones of the classical MFG system.
 \end{remark}



\end{document}